\documentclass[11pt]{amsart}

\usepackage{amsmath,amssymb,amsthm,mathrsfs,enumitem}
\usepackage[colorlinks=true,linkcolor=blue,citecolor=blue,urlcolor=blue]{hyperref}

\numberwithin{equation}{section}
\newtheorem{theorem}{Theorem}
\newtheorem{proposition}[theorem]{Proposition}
\newtheorem{lemma}[theorem]{Lemma}
\newtheorem{corollary}[theorem]{Corollary}
\newtheorem{remark}[theorem]{Remark}

\newtheorem{conjecture}[theorem]{Conjecture}
\newtheorem{problem}[theorem]{Problem}

\newcommand{\bR}{\mathbb R}
\newcommand{\D}{\mathcal D}
\newcommand{\Res}{\operatorname{Res}}
\newcommand{\disc}{\operatorname{disc}}

\newcommand{\barx}{\overline{x}}

\title[Discriminants and square-graph cones]
{Discriminants of derivatives and the square-graph cone}

\author{Boris Shapiro}
\address{Department of Mathematics, Stockholm University,
S--10691 Stockholm, Sweden}
\email{shapiro@math.su.se}

\begin{document}

\begin{abstract}
Let $P$ be a monic polynomial of degree $n$ with roots $x_1,\ldots,x_n$.
We study the discriminants of the derivatives $P^{(k)}$ as symmetric
translation-invariant polynomials in the original roots.  Alexandersson and
Shapiro conjectured that every such discriminant belongs to the cone generated
by symmetrized graph monomials with even edge multiplicities.  We obtain a
sharp positive/negative picture in the first terminal cases.  For the terminal
cubic family $k=n-3$ we prove the conjecture for every $n\ge3$, and for
$n\ge5$ obtain the three-graph formula conjectured in \cite[Example~3]{AS}.
For the terminal quartic family we show, by exact finite computation, that
$\disc(P^{(n-4)})$ belongs to the square-graph cone if and only if
$4\le n\le22$.  Thus the general square-graph cone conjecture fails already
at $(n,k)=(23,19)$ and fails for every $k=n-4$ with $n\ge23$.  The negative
result is certified by an explicit linear functional which is nonnegative on
every degree-$12$ square-graph generator and strictly negative on the quartic
discriminant.  We also record central-moment formulas, the subset-average and
finite Appell structure of normalized terminal polynomials, and the explicit
quintic member.
\end{abstract}

\subjclass[2020]{Primary 26C10, 05C31; Secondary 12D10, 05E05, 14M25}

\keywords{discriminant, derivative of a polynomial, symmetrized graph monomial,
sum of squares, square-graph cone, central moments}

\maketitle

\section{Introduction}

Let
\[
P(t)=\prod_{i=1}^n(t-x_i)=t^n+a_1t^{n-1}+\cdots+a_n
\]
be a monic polynomial.  For $0\le k\le n-2$ put
\[
\D_k^{(n)}:=\disc(P^{(k)}),
\]
where the discriminant is taken with the usual leading-coefficient
normalization.
Equivalently,
\[
\D_k^{(n)}=(-1)^{m(m-1)/2}c^{-1}\Res(P^{(k)},P^{(k+1)}),\qquad
m=n-k,
\]
where $c$ is the leading coefficient of $P^{(k)}$.

The polynomial $\D_k^{(n)}$ is symmetric and translation-invariant in
$x_1,\ldots,x_n$.  Hence it lies in the algebra generated by the pairwise
root differences $x_i-x_j$.  It is natural to ask whether it admits a
representation by manifestly nonnegative expressions in these differences.

The following strengthening of a question of Sottile and Mukhin was formulated
in \cite{AS}.

\begin{conjecture}[square-graph cone conjecture]\label{conj:squaregraph}
For every $n\ge2$ and every $0\le k\le n-2$, the polynomial $\D_k^{(n)}$
belongs to the convex cone generated by symmetrized graph monomials whose
edges all have even multiplicity.
Equivalently, $\D_k^{(n)}$ is a positive linear combination of symmetric sums
of products
\[
\prod_{(i,j)\in E(G)}(x_i-x_j)^2 .
\]
\end{conjecture}

For $k=0$ this is the usual discriminant formula.  For the first derivative,
the weaker assertion that $\disc(P')$ is a sum of squares in the differences of
the roots was proved by Sanyal, Sturmfels and Vinzant via the entropic
discriminant \cite[Corollary~14]{Sanyal}.  Alexandersson and Shapiro found
considerable evidence for Conjecture~\ref{conj:squaregraph}: in particular,
\cite[Example~3]{AS} proposed an explicit three-square-graph formula for the
terminal cubic family, and their Examples~4--5 give positive terminal quartic
certificates for $n=5,6$.

The main point of the present note is that this positive pattern has a sharp
boundary.  We work in the terminal range, where the order
\[
        r=n-k
\]
of the remaining derivative is fixed.  The main contributions are the
following.
\begin{enumerate}[label={\rm(\alph*)}]
\item We give closed central-moment formulas for the terminal quadratic,
cubic and quartic discriminants.
\item We prove a positive square-graph expansion for the terminal cubic
family.  Consequently Conjecture~\ref{conj:squaregraph} holds for
$\disc(P^{(n-3)})$ for every $n\ge3$.
\item For $n\ge5$ we compress the cubic certificate to three graph types.
After accounting for symmetrization conventions, the resulting coefficients
are exactly those conjectured in \cite[Example~3]{AS}; thus the earlier finite
verification becomes an all-parameter proof.
\item For the terminal quartic family we obtain a sharp classification:
$\disc(J_{n,4})$ belongs to the degree-$12$ square-graph cone exactly for
$4\le n\le22$.  For $n\ge23$ we give an explicit separating linear functional
which is nonnegative on every square-graph generator and strictly negative on
$\disc(J_{n,4})$.  This disproves Conjecture~\ref{conj:squaregraph}, with the
first terminal-quartic counterexample at $(n,k)=(23,19)$.
\item We introduce normalized terminal polynomials $J_{n,r}$ for arbitrary
fixed terminal order $r$.  They are uniform averages of the characteristic
polynomials of $r$-subsets of the centered roots and satisfy the finite Appell
relation $J'_{n,r}=rJ_{n,r-1}$.  We also write down the quintic member
explicitly.
\item We show that fixed degree reduces the square-graph cone to finitely many
reduced graph types.  The quartic theorem shows that this finiteness should be
used on the \emph{dual} side as well: stable separating functionals, rather
than uniformly positive primal certificates, become the natural objects.
\end{enumerate}
Thus the cubic family is uniformly positive, whereas the very next terminal
order crosses out of the square-graph cone at the finite threshold $n=23$.

\section{Graph monomials and the ambient vector space}

We recall the basic language.  If $G$ is a loopless multigraph with vertex set
$\{1,\ldots,n\}$, define
\[
M_G(x_1,\ldots,x_n)=\prod_{(i,j)\in E(G)}(x_i-x_j),
\]
where an orientation may be chosen for each edge.  Its symmetrization is
\[
\widetilde G:=\sum_{\sigma\in S_n}M_G(x_{\sigma(1)},\ldots,x_{\sigma(n)}).
\]
If every edge of $G$ occurs with even multiplicity, then $\widetilde G$ is a
sum of squares, since it is a sum of squares of monomials in differences.
Such graphs will be called \emph{square graphs}.

Let $\mathrm{PST}_{n,d}$ denote the vector space of homogeneous symmetric
translation-invariant polynomials of degree $d$ in $n$ variables.  It is
classical that this space is spanned by symmetrized graph monomials with $d$
edges; see \cite{AS} and the references therein.  Since
\[
\deg \D_k^{(n)}=(n-k)(n-k-1),
\]
Conjecture~\ref{conj:squaregraph} says that a distinguished element of
$\mathrm{PST}_{n,(n-k)(n-k-1)}$ lies in the square-graph cone.

\section{The quadratic terminal case}

Let
\[
\barx=\frac1n\sum_{i=1}^n x_i,
\qquad
z_i=x_i-\barx,
\qquad
p_r=\sum_{i=1}^n z_i^r .
\]
Thus $p_1=0$.

\begin{proposition}\label{prop:quadratic}
For every $n\ge2$,
\[
\D_{n-2}^{(n)}
=(n-1)((n-2)!)^2\sum_{1\le i<j\le n}(x_i-x_j)^2.
\]
Equivalently,
\[
\D_{n-2}^{(n)}=n(n-1)((n-2)!)^2p_2.
\]
\end{proposition}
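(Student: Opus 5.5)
Our approach is to compute $P^{(n-2)}$ explicitly in terms of the coefficients $a_1,a_2$ and then take the discriminant of this quadratic directly. Write $P(t)=t^n+a_1t^{n-1}+a_2t^{n-2}+\cdots$. Differentiating $n-2$ times kills every term of degree below $n-2$, and we get
\[
P^{(n-2)}(t)=\frac{n!}{2}\,t^2+(n-1)!\,a_1\,t+(n-2)!\,a_2 .
\]
With the usual normalization, the discriminant of $At^2+Bt+C$ is $B^2-4AC$. This gives
\[
\D_{n-2}^{(n)}=((n-1)!)^2a_1^2-2\,n!\,(n-2)!\,a_2
=((n-2)!)^2\bigl((n-1)^2a_1^2-2n(n-1)a_2\bigr).
\]

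Next we express this in the roots. We use $a_1=-e_1$ and $a_2=e_2$, where $e_1=\sum x_i$, together with $2e_2=e_1^2-s_2$ and $s_2=\sum x_i^2$. Substituting, the bracket becomes
\[
(n-1)^2e_1^2-n(n-1)(e_1^2-s_2)=(n-1)\bigl(n s_2-e_1^2\bigr).
\]
We then apply the classical identity $n s_2-e_1^2=\sum_{i<j}(x_i-x_j)^2$. This yields the first form,
\[
\D_{n-2}^{(n)}=(n-1)((n-2)!)^2\sum_{i<j}(x_i-x_j)^2 .
\]

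For the second form we use translation invariance and evaluate with the centered variables $z_i$. Since $\sum_i z_i=0$, we have $\sum_{i<j}(z_i-z_j)^2=n p_2-p_1^2=n p_2$. Substituting gives $\D_{n-2}^{(n)}=n(n-1)((n-2)!)^2p_2$.

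There is no real obstacle here. The only care needed is with the factorial coefficients in $P^{(n-2)}$, namely $n!/2$, $(n-1)!$ and $(n-2)!$, and with the sign of $a_1$. As a check, at $n=2$ the formula gives $(x_1-x_2)^2$, as it should.
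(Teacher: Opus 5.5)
Your proof is correct and follows essentially the same route as the paper: compute $P^{(n-2)}=\frac{n!}{2}t^2+(n-1)!\,a_1t+(n-2)!\,a_2$, take $B^2-4AC$, and identify the resulting bracket with $\sum_{i<j}(x_i-x_j)^2=np_2$. The only cosmetic difference is that you pass through the power sum $s_2$ to reach $ns_2-e_1^2$, whereas the paper recognizes $(n-1)e_1^2-2ne_2$ directly as the sum of squared differences.
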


\begin{proof}
The polynomial $P^{(n-2)}$ is quadratic.  Its coefficients are
\[
P^{(n-2)}(t)=\frac{n!}{2}t^2+(n-1)!a_1t+(n-2)!a_2.
\]
Therefore
\[
\disc(P^{(n-2)})=((n-1)!a_1)^2-2n!(n-2)!a_2.
\]
Using $a_1=-\sum_i x_i$ and $a_2=\sum_{i<j}x_ix_j$, this becomes
\[
(n-1)((n-2)!)^2\left((n-1)\Big(\sum_i x_i\Big)^2
-2n\sum_{i<j}x_ix_j\right).
\]
The expression in parentheses is
$\sum_{i<j}(x_i-x_j)^2=np_2$, which proves the claim.
\end{proof}

This proves Conjecture~\ref{conj:squaregraph} in the terminal quadratic case.
It is also the formula appearing as Example~2 in \cite{AS}, with the same
normalization of the discriminant.

\section{The cubic terminal case}

The next terminal case is already more interesting.  It gives a compact formula
in terms of the second and third central moments of the roots.

\begin{theorem}\label{thm:cubic}
For every $n\ge3$,
\[
\boxed{
\D_{n-3}^{(n)}=\frac{n!((n-2)!)^3}{12}
\left(p_2^3-\frac{n(n-1)}{(n-2)^2}p_3^2\right).}
\]
In particular, if the roots $x_1,\ldots,x_n$ are real, then
\[
 p_2^3\ge \frac{n(n-1)}{(n-2)^2}p_3^2 .
\]
\end{theorem}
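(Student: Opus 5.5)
By translation invariance of both sides, the plan is to reduce to centered roots and then apply the explicit discriminant formula for a depressed cubic. Both $\D_{n-3}^{(n)}$ and the right-hand side depend only on the differences $x_i-x_j$: the left side because $\disc$ is invariant under $t\mapsto t+c$, and the right side because $p_2,p_3$ are built from $z_i=x_i-\barx$. So we may assume $\sum_i x_i=0$. Then $x_i=z_i$ and $a_1=0$.

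Next we write out the cubic. Differentiating $P$ term by term gives
\[
P^{(n-3)}(t)=\frac{n!}{6}t^3+\frac{(n-1)!}{2}a_1t^2+(n-2)!\,a_2t+(n-3)!\,a_3 .
\]
With $a_1=0$ this is the depressed cubic $At^3+Ct+D$. Newton's identities with $p_1=0$ give $a_2=e_2=-p_2/2$ and $a_3=-e_3=-p_3/3$. Hence
\[
A=\frac{n!}{6},\qquad C=-\frac{(n-2)!}{2}p_2,\qquad D=-\frac{(n-3)!}{3}p_3 .
\]
The sign of $D$ is irrelevant, since only $D^2$ enters.

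For the discriminant we use the same leading-coefficient normalization as in Proposition~\ref{prop:quadratic}, where the quadratic case gave $b^2-4ac$. With this convention, the discriminant of $At^3+Ct+D$ is $-4AC^3-27A^2D^2$. The two terms then work out as follows:
\[
-4AC^3=\frac{n!\,((n-2)!)^3}{12}\,p_2^3,
\qquad
27A^2D^2=\frac{(n!)^2((n-3)!)^2}{12}\,p_3^2 .
\]
Factoring out $n!((n-2)!)^3/12$ leaves the coefficient $n!((n-3)!)^2/((n-2)!)^3=n(n-1)/(n-2)^2$ in front of $p_3^2$. This is exactly the boxed formula.

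For the inequality, suppose all $x_i$ are real. Repeated application of Rolle's theorem shows that every derivative $P^{(k)}$ has only real roots, and in particular so does the cubic $P^{(n-3)}$. A real cubic with only real roots has nonnegative discriminant. Since $n!((n-2)!)^3/12>0$, this gives $p_2^3\ge \frac{n(n-1)}{(n-2)^2}p_3^2$.

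The main obstacle is bookkeeping rather than ideas. One must confirm that the cubic discriminant formula used matches the convention $\D=(-1)^{m(m-1)/2}c^{-1}\Res(P^{(k)},P^{(k+1)})$ with $m=3$, and get the factorials in the coefficients right. I would check this convention on a single example, such as $n=3$ with roots $-1,0,1$, before trusting the general computation.
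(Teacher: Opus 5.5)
Your proposal is correct and is essentially the paper's proof: center the roots, use Newton's identities to obtain the depressed cubic $\frac{n!}{6}t^3-\frac{(n-2)!}{2}p_2\,t-\frac{(n-3)!}{3}p_3$, apply the discriminant $-4AC^3-27A^2D^2$, and use Rolle's theorem for the inequality. Your factorial bookkeeping is right, and the cubic formula you use matches the normalization $(-1)^{m(m-1)/2}c^{-1}\Res(P^{(k)},P^{(k+1)})$, so the check you planned would succeed.
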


\begin{proof}
Translate the variable so that the roots become $z_i=x_i-\barx$ and
$\sum_i z_i=0$.  Translation does not change the discriminant.
The polynomial $P$ then has the form
\[
P(t)=t^n+e_2t^{n-2}-e_3t^{n-3}+\cdots,
\]
where, by Newton's identities,
\[
e_2=-\frac{p_2}{2},\qquad e_3=\frac{p_3}{3}.
\]
After differentiating $n-3$ times, we obtain the depressed cubic
\[
P^{(n-3)}(t)=\frac{n!}{6}t^3+(n-2)!e_2t-(n-3)!e_3.
\]
For a depressed cubic $At^3+Bt+C$, the discriminant is
\[
-4AB^3-27A^2C^2.
\]
Substituting
\[
A=\frac{n!}{6},\qquad
B=-\frac{(n-2)!}{2}p_2,
\qquad
C=-\frac{(n-3)!}{3}p_3
\]
gives
\[
\D_{n-3}^{(n)}
=\frac{n!((n-2)!)^3}{12}p_2^3
-\frac{(n!)^2((n-3)!)^2}{12}p_3^2.
\]
Since
\[
\frac{(n!)^2((n-3)!)^2}{n!((n-2)!)^3}
=\frac{n(n-1)}{(n-2)^2},
\]
the stated formula follows.  If all $x_i$ are real, then $P^{(n-3)}$ is a real
cubic with three real roots by Rolle's theorem, and hence its discriminant is
nonnegative.  This gives the displayed moment inequality.
\end{proof}

\begin{remark}
The inequality in Theorem~\ref{thm:cubic} is the sharp finite version of the
usual bound on skewness for a distribution supported on $n$ equally weighted
points.  The normalization here is especially natural because it is forced by
the discriminant of $P^{(n-3)}$.
\end{remark}

\begin{proposition}\label{prop:equality}
Assume $x_1,\ldots,x_n\in\bR$ are not all equal.  Equality in
\[
 p_2^3\ge \frac{n(n-1)}{(n-2)^2}p_3^2
\]
holds if and only if, after a permutation, the centered roots have the form
\[
z_1=\cdots=z_{n-1}=a,
\qquad
z_n=-(n-1)a
\]
for some nonzero real number $a$, or the same configuration with $a$ replaced
by $-a$.
\end{proposition}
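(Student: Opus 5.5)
Equality in the inequality of Theorem~\ref{thm:cubic} means exactly that $\D_{n-3}^{(n)}=0$, i.e.\ that the real cubic $P^{(n-3)}$ has a repeated root. I plan to characterize the equality case directly as an extremal problem for the centered roots. By homogeneity, normalize $p_2=\sum z_i^2=1$ with $\sum z_i=0$. The statement then becomes: $|p_3|$ attains its maximum $(n-2)/\sqrt{n(n-1)}$ on this sphere exactly at the claimed configurations. The inequality itself is already known from Theorem~\ref{thm:cubic}, so only the structure of the maximizers needs to be determined.

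The first step is Lagrange multipliers. On the compact set $\{\sum z_i=0,\ \sum z_i^2=1\}$, any critical point of $p_3=\sum z_i^3$ satisfies
\[
3z_i^2=\lambda+2\mu z_i\qquad\text{for all } i .
\]
Hence each $z_i$ is a root of one fixed quadratic, so the $z_i$ take at most two distinct values, say $a$ with multiplicity $m$ and $b$ with multiplicity $n-m$, where $1\le m\le n-1$. The case of a single value is excluded, since it would force all $z_i=0$, contradicting $p_2=1$.

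The second step is to solve the two-value configurations. The constraints $ma+(n-m)b=0$ and $ma^2+(n-m)b^2=1$ give
\[
a=\sqrt{\tfrac{n-m}{nm}},\qquad b=-\sqrt{\tfrac{m}{n(n-m)}}
\]
up to a global sign. A direct computation then yields
\[
p_3=\pm\frac{n-2m}{\sqrt{nm(n-m)}} .
\]
It remains to maximize $f(m)=(n-2m)^2/(m(n-m))$ over integers $1\le m\le n-1$. Since $f$ is symmetric under $m\mapsto n-m$ and decreasing on $[1,n/2]$, its maximum is at $m=1$ or $m=n-1$, with value $(n-2)^2/(n-1)$. This reproduces the constant $(n-2)^2/(n(n-1))$ for $p_3^2$, consistent with the theorem. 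Moreover, when $n>2$ the maximum is strict: every $m$ with $2\le m\le n-2$ gives a strictly smaller value. Therefore every maximizer of $p_3^2$ is, up to permutation, the configuration in which $n-1$ coordinates are equal and the last is $-(n-1)$ times them. Rescaling back gives precisely the configurations in the statement, with either sign of $a$.

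For the converse, I check directly that $z=(a,\dots,a,-(n-1)a)$ gives
\[
p_2=n(n-1)a^2,\qquad p_3=n(n-1)(2-n)a^3 ,
\]
so that $p_2^3=\frac{n(n-1)}{(n-2)^2}p_3^2$. I expect the main obstacle to be justifying that equality points are exactly maximizers of $p_3^2$, and hence critical points. Since the inequality holds everywhere and its two sides are homogeneous of the same degree, any equality point maximizes $p_3^2/p_2^3$. That ratio is smooth away from $z=0$, so on the normalized sphere an equality point is an interior extremum of $p_3$ or of $-p_3$, and the Lagrange condition applies. The case $n=3$ needs a separate check: there $f(1)=f(2)=1$, and both cases give the stated form. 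So the argument works uniformly for all $n\ge3$.
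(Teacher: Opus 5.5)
Your proposal is correct and is essentially the paper's own argument: Lagrange multipliers on $\{\sum z_i=0,\ p_2=1\}$ force at most two distinct values, and maximizing $(n-2m)^2/(m(n-m))$ singles out $m\in\{1,n-1\}$. You also supply two steps the paper leaves implicit: that equality points are global maximizers of $p_3^2/p_2^3$ and hence critical points, and a direct check of the converse. One trivial slip: for $n=3$ one has $f(1)=f(2)=1/2$, not $1$, and no separate check is needed there, since the range $2\le m\le n-2$ is empty.
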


\begin{proof}
Equality is equivalent to $\D_{n-3}^{(n)}=0$, i.e. to $P^{(n-3)}$ having a
multiple root.  Since $P$ has real roots, $P^{(n-3)}$ has three real roots
counted with multiplicity.  A depressed cubic with a multiple root has root
multiset $\{u,u,-2u\}$ for some $u\in\bR$.

Equivalently, the extremal problem for $p_3$ under the constraints
$\sum_i z_i=0$ and $\sum_i z_i^2=p_2$ has a Lagrange multiplier equation
\[
3z_i^2=\lambda+2\mu z_i.
\]
Thus all $z_i$ assume at most two distinct values.  If these values have
multiplicities $r$ and $n-r$, the constraints give
\[
|p_3|=\frac{|n-2r|}{\sqrt{r(n-r)n}}p_2^{3/2}.
\]
This is maximal precisely for $r=1$ or $r=n-1$, giving the stated equality
case.
\end{proof}

\section{A quartic terminal formula}

The next case is still computable in closed form.  It is useful because it
is the first terminal case involving the fourth central moment.

\begin{theorem}\label{thm:quartic}
Let $n\ge4$.  With the notation
\[
p_r=\sum_{i=1}^n (x_i-\bar x)^r ,
\qquad
\bar x=\frac1n\sum_i x_i ,
\]
one has
\[
\D_{n-4}^{(n)}
=\disc\left(
A t^4+B t^2+C t+D
\right),
\]
where
\[
\begin{aligned}
A&=\frac{n!}{24},
&B&=-\frac{(n-2)!}{4}p_2,\\
C&=-\frac{(n-3)!}{3}p_3,
&D&=(n-4)!\left(\frac{p_2^2}{8}-\frac{p_4}{4}\right).
\end{aligned}
\]
Equivalently,
\[
\begin{aligned}
\D_{n-4}^{(n)}
={}&256A^3D^3-128A^2B^2D^2+144A^2BC^2D-27A^2C^4\\
&\quad +16AB^4D-4AB^3C^2 .
\end{aligned}
\]
In particular, for real roots $x_1,\ldots,x_n$ this gives the sharp
finite moment inequality
\[
256A^3D^3-128A^2B^2D^2+144A^2BC^2D-27A^2C^4
+16AB^4D-4AB^3C^2\ge0 .
\]
\end{theorem}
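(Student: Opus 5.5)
Following the proof of Theorem~\ref{thm:cubic}, we translate to centered roots $z_i=x_i-\barx$. Translation does not change the discriminant, and it makes $p_1=0$. Write
\[
P(t)=t^n+e_2t^{n-2}-e_3t^{n-3}+e_4t^{n-4}-\cdots,
\]
where $e_j$ are the elementary symmetric functions of the $z_i$ (so the coefficient of $t^{n-j}$ is $(-1)^je_j$). Newton's identities with $p_1=0$ give
\[
e_2=-\frac{p_2}{2},\qquad e_3=\frac{p_3}{3},\qquad e_4=\frac{p_2^2}{8}-\frac{p_4}{4}.
\]
The last follows from $4e_4=e_3p_1-e_2p_2+e_1p_3-p_4$ with $e_1=p_1=0$, which gives $4e_4=p_2^2/2-p_4$.

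Next we differentiate $n-4$ times. The term $c\,t^{n-j}$ becomes $c\,\frac{(n-j)!}{(4-j)!}t^{4-j}$, so
\[
P^{(n-4)}(t)=\frac{n!}{24}t^4+\frac{(n-2)!}{2}e_2\,t^2-(n-3)!\,e_3\,t+(n-4)!\,e_4 .
\]
Substituting the values of $e_2,e_3,e_4$ gives exactly $A,B,C,D$ as stated. The leading-coefficient normalization of the discriminant is the same one used for $P^{(n-4)}$ itself, so $\D_{n-4}^{(n)}=\disc(At^4+Bt^2+Ct+D)$.

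The explicit expansion is the standard discriminant of a general quartic $at^4+bt^3+ct^2+dt+e$ specialized to $b=0$. In that specialization only the six listed monomials survive:
\[
256a^3e^3-128a^2c^2e^2+144a^2cd^2e-27a^2d^4+16ac^4e-4ac^3d^2 .
\]
This can be checked against the classical formula, or derived from $\disc=a^{6}\prod_{i<j}(t_i-t_j)^2$ via the resolvent cubic. Renaming $c,d,e$ as $B,C,D$ gives the displayed identity.

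For the inequality, take all $x_i$ real. Rolle's theorem, applied iteratively, shows that $P^{(n-4)}$ is a real quartic with four real roots counted with multiplicity. Its discriminant is $A^{6}\prod_{i<j}(t_i-t_j)^2$ with all $t_i$ real, hence it is nonnegative.

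Sharpness means equality is attained. Equality holds exactly when $P^{(n-4)}$ has a repeated root, for instance when all but one $z_i$ are equal. In that case $P=(t-a)^{n-1}(t-b)$, and every derivative of order at most $n-2$ retains the factor $(t-a)$ with multiplicity at least $2$; here $n-4\le n-2$, so the bound is attained.

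The main obstacle is bookkeeping rather than any conceptual difficulty. One must keep the sign conventions of $e_3$, $C$ and $D$ consistent, and verify the coefficient identity $e_4=p_2^2/8-p_4/4$ correctly.
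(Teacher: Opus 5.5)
Your proof is correct and follows the paper's argument essentially step for step: center the roots, obtain $e_2,e_3,e_4$ from Newton's identities, differentiate $n-4$ times, specialize the general quartic discriminant to $b=0$, and use Rolle's theorem for real roots. One small slip occurs in your extra sharpness paragraph, which the paper does not include: after $k$ derivatives of $(t-a)^{n-1}(t-b)$ the factor $(t-a)$ is only guaranteed multiplicity $n-1-k$, so ``multiplicity at least $2$'' holds for $k\le n-3$ but fails at $k=n-2$; this is harmless for the case $k=n-4$ you actually need, where the multiplicity is at least $3$.
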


\begin{proof}
After translating the roots we may assume $\sum_i x_i=0$.  Newton's identities
give
\[
e_2=-\frac{p_2}{2},\qquad
e_3=\frac{p_3}{3},\qquad
e_4=\frac{p_2^2}{8}-\frac{p_4}{4}.
\]
Hence
\[
P(t)=t^n+e_2t^{n-2}-e_3t^{n-3}+e_4t^{n-4}+\cdots .
\]
Differentiating $n-4$ times gives
\[
P^{(n-4)}(t)
=
\frac{n!}{24}t^4+\frac{(n-2)!}{2}e_2t^2
-(n-3)!e_3t+(n-4)!e_4,
\]
which is precisely the stated quartic.  The discriminant formula for
$At^4+Bt^2+Ct+D$ is
\[
256A^3D^3-128A^2B^2D^2+144A^2BC^2D-27A^2C^4
+16AB^4D-4AB^3C^2 .
\]
Finally, if the $x_i$ are real, then all roots of $P^{(n-4)}$ are real by
Rolle's theorem; therefore its discriminant is nonnegative.
\end{proof}

\begin{corollary}[normalized quartic form]\label{cor:normalized-quartic}
Let
\[
q_{n,4}(t)=t^4+\alpha t^2+\beta t+\gamma,
\]
where
\[
\begin{aligned}
\alpha&=-\frac{6p_2}{n(n-1)},
&\beta&=-\frac{8p_3}{n(n-1)(n-2)},\\
\gamma&=\frac{3(p_2^2-2p_4)}{n(n-1)(n-2)(n-3)}.
\end{aligned}
\]
Then, after centering the roots,
\[
P^{(n-4)}(t)=\frac{n!}{24}\,q_{n,4}(t)
\]
and hence
\[
\D_{n-4}^{(n)}=\left(\frac{n!}{24}\right)^6\disc(q_{n,4}).
\]
In particular, for real roots the depressed quartic $q_{n,4}$ is hyperbolic
and
\[
\begin{aligned}
0\le \disc(q_{n,4})={}&256\gamma^3-128\alpha^2\gamma^2
+144\alpha\beta^2\gamma-27\beta^4\\
&\hspace{1.3cm}+16\alpha^4\gamma-4\alpha^3\beta^2 .
\end{aligned}
\]
If $p_3=0$, this specializes to the factorization
\[
\disc(q_{n,4})=16\gamma(\alpha^2-4\gamma)^2 .
\]
\end{corollary}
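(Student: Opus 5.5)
We derive the corollary directly from Theorem~\ref{thm:quartic} by factoring out the leading coefficient $A=n!/24$. Dividing, we set $\alpha=B/A$, $\beta=C/A$ and $\gamma=D/A$. This gives $\alpha=-\frac{(n-2)!}{4}\cdot\frac{24}{n!}p_2=-\frac{6p_2}{n(n-1)}$. Likewise $\beta=-\frac{(n-3)!}{3}\cdot\frac{24}{n!}p_3=-\frac{8p_3}{n(n-1)(n-2)}$. Finally $\gamma=(n-4)!\cdot\frac{24}{n!}\cdot\frac{p_2^2-2p_4}{8}=\frac{3(p_2^2-2p_4)}{n(n-1)(n-2)(n-3)}$. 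Each of these is a one-line factorial simplification, and they match the stated values. This gives $P^{(n-4)}=A\,q_{n,4}$ after centering.

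Next, the discriminant of a degree-$4$ polynomial is homogeneous of degree $2\cdot4-2=6$ in its coefficients. This is visible in the explicit formula of Theorem~\ref{thm:quartic}: each monomial there, such as $A^3D^3$ or $AB^3C^2$, has total degree $6$. Hence $\disc(Aq)=A^6\disc(q)$, and translation invariance of the discriminant gives $\D_{n-4}^{(n)}=(n!/24)^6\disc(q_{n,4})$. The displayed formula for $\disc(q_{n,4})$ is then the quartic formula with $A=1$. Nonnegativity for real roots follows as before. Rolle's theorem makes $P^{(n-4)}$, hence $q_{n,4}$, hyperbolic. A real polynomial with all roots real has discriminant $\prod_{i<j}(t_i-t_j)^2\ge0$, and $(n!/24)^6>0$.

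For the specialization $p_3=0$, we have $\beta=0$, so the formula reduces to
\[
256\gamma^3-128\alpha^2\gamma^2+16\alpha^4\gamma .
\]
This equals $16\gamma(\alpha^4-8\alpha^2\gamma+16\gamma^2)=16\gamma(\alpha^2-4\gamma)^2$. As a sanity check, the biquadratic $t^4+\alpha t^2+\gamma$ has the classical discriminant $16\gamma(\alpha^2-4\gamma)^2$.

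There is no genuine obstacle here. The only point needing care is the factorial bookkeeping in $\gamma$, where $(n-4)!/n!=1/(n(n-1)(n-2)(n-3))$ combines with $24/8=3$.
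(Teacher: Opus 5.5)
Your proposal is correct and follows the paper's proof. Like the paper, you divide the quartic of Theorem~\ref{thm:quartic} by $A=n!/24$, apply the homogeneity rule $\disc(cq)=c^{6}\disc(q)$ for quartics, and obtain $16\gamma(\alpha^2-4\gamma)^2$ from the biquadratic case $\beta=0$; your factorial simplifications for $\alpha,\beta,\gamma$ are also correct.
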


\begin{proof}
Divide the quartic in Theorem~\ref{thm:quartic} by its leading coefficient
$n!/24$.  The formula for the discriminant follows from the homogeneity rule
$\disc(cq)=c^{2r-2}\disc(q)$ for a degree $r$ polynomial.  The final
factorization is the standard discriminant factorization for a biquadratic
quartic $t^4+\alpha t^2+\gamma$.
\end{proof}

\begin{remark}
The quartic formula gives a natural continuation of the cubic skewness
inequality.  In the language of finite probability distributions with equal
weights it is an algebraic inequality among the variance, skewness, and fourth
central moment.  Unlike the cubic case, the equality locus is no longer only a
two-point configuration; it corresponds to configurations for which
$P^{(n-4)}$ has a multiple root.  The next section shows that, despite this
nonnegativity, the normalized discriminant eventually leaves the much smaller
square-graph cone.
\end{remark}

\section{A sharp quartic obstruction}\label{sec:quartic-obstruction}

Put
\[
   \Delta_{n,4}:=\disc(J_{n,4})
   =\left(\frac{24}{n!}\right)^6\D_{n-4}^{(n)},
\]
and let $\mathcal C_{n,12}\subset\mathrm{PST}_{n,12}$ denote the cone
generated by degree-$12$ square graphs.  Since the scalar relating
$\Delta_{n,4}$ and $\D_{n-4}^{(n)}$ is positive, the two polynomials have the
same cone-membership behavior.

For a homogeneous symmetric form $F$ of degree $12$ write
\[
       F=\sum_{\lambda\vdash12}c_\lambda(F)m_\lambda,
\]
where $m_\lambda$ is the monomial symmetric polynomial (terms with
$\ell(\lambda)>n$ are absent).  For $n\ge12$ define the linear functional
\begin{align}\label{eq:quartic-separator}
\Lambda_n(F)={}&96\bigl(
 c_{(12)}+c_{(10,2)}-c_{(9,3)}+c_{(8,4)}+c_{(8,2,2)}
 -c_{(7,5)}-c_{(7,3,2)} \notag\\
&\hspace{31mm}+c_{(6,6)}+c_{(5,4,3)}\bigr)
 +32c_{(4,4,4)}+(n-3)c_{(3,3,3,3)}.
\end{align}

\begin{theorem}[sharp terminal-quartic threshold]\label{thm:quartic-threshold}
For every integer $n\ge4$,
\[
             \Delta_{n,4}\in\mathcal C_{n,12}
             \quad\Longleftrightarrow\quad 4\le n\le22.
\]
Consequently Conjecture~\ref{conj:squaregraph} is false.  More precisely,
\[
       \D_{n-4}^{(n)}\notin\mathcal C_{n,12}
       \qquad(n\ge23),
\]
so $(n,k)=(23,19)$ is the first counterexample inside the terminal quartic
family.
\end{theorem}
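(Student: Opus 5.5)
The proof splits into a negative part for $n\ge23$, certified by the functional $\Lambda_n$ of \eqref{eq:quartic-separator}, and a positive part for $4\le n\le22$, which is a finite computation.

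\textbf{Negative direction ($n\ge23$).} Two facts have to be established.

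First, $\Lambda_n(\widetilde G)\ge0$ for every degree-$12$ square graph $G$. A square graph with $12$ edges has $6$ distinct edge pairs. Up to isomorphism it is therefore a reduced multigraph with at most $12$ non-isolated vertices and total doubled multiplicity $12$. Isolated vertices only change $\widetilde G$ by a combinatorial factor depending on $n$. So there are finitely many reduced types, each with coefficients $c_\lambda(\widetilde G)$ that are explicit polynomials in $n$. For each type I will do the following:
\begin{enumerate}[label={\rm(\roman*)}]
\item expand $M_G^2=\prod(x_i-x_j)^{2m_{ij}}$;
\item read off the coefficients of the finitely many monomials $x^\lambda$ appearing in $\Lambda_n$ (all partitions in $\Lambda_n$ have length at most $4$, so only terms supported on at most $4$ vertices matter);
\item multiply by the orbit-count factor, which is a falling factorial in $n$.
\end{enumerate}
The resulting quantity $\Lambda_n(\widetilde G)$ is a polynomial in $n$. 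I will check that it is nonnegative for all $n\ge12$, typically by exhibiting it as a nonnegative combination of falling factorials $(n-a)_b$.

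Second, $\Lambda_n(\Delta_{n,4})<0$. Corollary~\ref{cor:normalized-quartic} writes $\Delta_{n,4}$ as a polynomial in $p_2,p_3,p_4$ with rational coefficients in $n$. Expanding the centered power sums $p_r$ into monomial symmetric functions of the $x_i$ gives each needed $c_\lambda(\Delta_{n,4})$ as a rational function of $n$. Then $\Lambda_n(\Delta_{n,4})$ is a rational function of $n$. I expect it to factor as (positive)$\times(22-n)\times$(positive) or similar, which vanishes or changes sign between $22$ and $23$ and gives strict negativity for $n\ge23$. Linearity then separates $\Delta_{n,4}$ from the cone.

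For $12\le n<23$ the functional need not separate, which is consistent with membership. For $n<12$ the functional is not defined, and those cases are handled by the positive direction.

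\textbf{Positive direction ($4\le n\le22$).} This is a finite feasibility problem, solved case by case. For each $n$ I will:
\begin{enumerate}[label={\rm(\roman*)}]
\item enumerate the finitely many reduced square-graph types of degree $12$ on at most $\min(n,12)$ support vertices;
\item compute their coordinates in the monomial basis of $\mathrm{PST}_{n,12}$;
\item solve the exact linear program $\Delta_{n,4}=\sum_G w_G\widetilde G$ with $w_G\ge0$ in rational arithmetic.
\end{enumerate}
The output is an explicit nonnegative rational certificate, which can be verified by substitution. For $n=5,6$ this must agree in spirit with Examples~4--5 of \cite{AS}. For $n=4$ the polynomial $\Delta_{4,4}$ is the classical discriminant, a single square graph (the doubled $K_4$), so that case is immediate.

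\textbf{Main obstacle.} The hardest step is the uniform-in-$n$ verification that $\Lambda_n(\widetilde G)\ge0$ for all square graph types. The number of reduced types is substantial, and the $n$-dependence enters through the orbit counts, so each check is a polynomial inequality. I would first organize the types by vertex-support size $s\le12$. For each $s$, I would compute the contribution of the $(n-3)c_{(3,3,3,3)}$ term, which is the only place $n$ enters beyond the overall factor. I would then verify the inequality separately at the leading order in $n$ and at the constant order, hoping that each type yields a polynomial with nonnegative coefficients in the basis $(n-12)^j$. If a type fails this, I would fall back to an explicit root check of the corresponding univariate polynomial.
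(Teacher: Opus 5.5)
Your plan matches the paper's proof: nonnegativity of $\Lambda_n$ on the finitely many reduced degree-$12$ square-graph types, checked by coefficient positivity after the shift $n=m+12$; an exact rational formula for $\Lambda_n(\Delta_{n,4})$, whose numerator $Q(n)$ has a real root between $22$ and $23$ rather than a literal factor $22-n$; and exact nonnegative rational certificates for $4\le n\le22$. One correction to your ``main obstacle'' remark: $n$ does not enter only through the weight $n-3$ and a single overall factor, because the orbit count $(n-\ell(\lambda))_{\downarrow v-\ell(\lambda)}$ multiplying $b_\lambda(H)$ depends on $\ell(\lambda)$, so the length-$1$ through length-$4$ terms of $\Lambda_n$ carry different polynomial weights in $n$.
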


The proof is computer-assisted but entirely exact.  We give the algebraic
reduction and the separating functional explicitly; the finite checks are
contained in the ancillary verifier described below.

Let $H$ be a loopless multigraph with six edges counted with multiplicity,
and let $2H$ denote the square graph obtained by doubling every edge.  If
$H$ has $v$ nonisolated labelled vertices, define its injective orbit sum by
\[
 \mathcal O_H^{(n)}=
 \sum_{\phi:[v]\hookrightarrow[n]}
 \prod_{uv\in E(H)}(x_{\phi(u)}-x_{\phi(v)})^{2m_{uv}}.
\]
After adjoining $n-v$ isolated vertices, the full $S_n$-symmetrization is
$(n-v)!\mathcal O_H^{(n)}$; hence the two conventions generate the same ray.
Every degree-$12$ square graph arises in this way.

For completeness, there is a particularly simple exact formula for the
monomial-symmetric coordinates of these orbit sums.  Put
\[
 Q_H(y_1,\ldots,y_v)=
 \prod_{uv\in E(H)}(y_u-y_v)^{2m_{uv}}
 =\sum_{\alpha\in\mathbb Z_{\ge0}^v}q_\alpha y^\alpha.
\]
For $\lambda\vdash12$, let $\alpha^+$ denote the positive entries of $\alpha$
and set
\[
 b_\lambda(H)=
 \left(\prod_{a\ge1}\operatorname{mult}_a(\lambda)!\right)
 \sum_{\operatorname{sort}(\alpha^+)=\lambda}q_\alpha .
\]
Then, whenever $n\ge v$,
\begin{equation}\label{eq:orbit-monomial-coordinate}
 [m_\lambda]\mathcal O_H^{(n)}
 =(n-\ell(\lambda))_{\downarrow\,v-\ell(\lambda)}b_\lambda(H).
\end{equation}
Thus every coordinate is an explicit integer polynomial in $n$.

\begin{lemma}[dual certificate]\label{lem:quartic-dual}
For every degree-$12$ square graph and every $n\ge12$ one has
\[
                   \Lambda_n(\widetilde G)\ge0.
\]
On the other hand,
\begin{equation}\label{eq:quartic-target-separation}
 \Lambda_n(\Delta_{n,4})=
 -\frac{2654208\,Q(n)}
 {n^6(n-3)^2(n-2)^4(n-1)^5},
\end{equation}
where
\[
 Q(n)=n^6-36n^5+376n^4-1920n^3+5635n^2-9456n+6912.
\]
In particular, $\Lambda_n(\Delta_{n,4})<0$ for every $n\ge23$.
\end{lemma}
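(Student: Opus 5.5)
The lemma has two independent parts: nonnegativity of $\Lambda_n$ on every generator, and the explicit value on $\Delta_{n,4}$. I will treat them separately. Both reduce to finitely many exact computations with coefficients polynomial in $n$.

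\emph{Nonnegativity on generators.} Since $\Lambda_n$ is linear and $\widetilde G$ is a positive multiple of $\mathcal O_H^{(n)}$ with $G=2H$, it suffices to show $\Lambda_n(\mathcal O_H^{(n)})\ge0$ for every loopless multigraph $H$ with six edges, up to isomorphism. There are finitely many such $H$. Isolated vertices are irrelevant, so $H$ has at most $12$ vertices; this is why $n\ge12$ is imposed.

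For each $H$, formula \eqref{eq:orbit-monomial-coordinate} expresses each coordinate $c_\lambda(\mathcal O_H^{(n)})$ as $(n-\ell(\lambda))_{\downarrow\,v-\ell(\lambda)}\,b_\lambda(H)$ with integer $b_\lambda(H)$. Hence $\Lambda_n(\mathcal O_H^{(n)})$ is an explicit integer polynomial in $n$. Only the eleven partitions appearing in \eqref{eq:quartic-separator} contribute, each of length $\le4$.

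I will compute the $b_\lambda(H)$ by expanding $Q_H$ and reading off coefficients. Then I will verify, for each $H$, that the resulting polynomial in $n$ is nonnegative for all integers $n\ge12$. The method is to expand in the basis of falling factorials $(n-12)_{\downarrow j}$ and check that all coefficients are nonnegative. Where that fails, I will fall back to a root bound followed by finitely many direct evaluations.

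I expect this enumeration to be the main obstacle: there are several hundred multigraph types, and the functional was evidently tuned, so some graphs presumably give exactly zero or a tight leading coefficient. The term $(n-3)c_{(3,3,3,3)}$ is the only explicitly $n$-dependent weight. I would first check that it dominates for graphs with four-vertex support, where $c_{(3,3,3,3)}$ can be negative, and handle those cases by hand as a sanity check.

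\emph{Value on the target.} By Corollary~\ref{cor:normalized-quartic}, $\Delta_{n,4}=\disc(q_{n,4})$ is a polynomial in $p_2,p_3,p_4$, with coefficients rational in $n$. It is built from the monomials $\gamma^3$, $\alpha^2\gamma^2$, $\alpha\beta^2\gamma$, $\beta^4$, $\alpha^4\gamma$ and $\alpha^3\beta^2$. I will rewrite each $p_r$ in terms of uncentered power sums via $z_i=x_i-\bar x$, that is, $p_r=\sum_j\binom rj(-\bar x)^{r-j}s_j$ with $s_j=\sum_i x_i^j$ and $\bar x=s_1/n$.

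It then suffices to know the $m_\lambda$-coordinates, for the eleven relevant $\lambda$, of products of power sums $s_\mu$ with $|\mu|=12$. These are classical: the coefficient of $m_\lambda$ in $s_\mu$ counts the ordered set partitions of the parts of $\mu$ into blocks with sums equal to the parts of $\lambda$. The computation is mechanical. Summing with the weights of \eqref{eq:quartic-separator} and simplifying should yield the rational function \eqref{eq:quartic-target-separation}.

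Finally, I will show that $Q(n)>0$ for $n\ge23$. The largest real root of $Q$ lies in $(22,23)$; I will verify $Q(23)>0$ and that $Q$ is increasing beyond $23$, for instance by substituting $n=23+m$ and checking that all coefficients in $m$ are positive. Since the denominator is positive for $n\ge4$, this gives $\Lambda_n(\Delta_{n,4})<0$ for every $n\ge23$.
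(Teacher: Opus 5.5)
Your plan is correct and is essentially the paper's proof. Like the paper, you reduce to the $212$ six-edge multigraph types $H$, use \eqref{eq:orbit-monomial-coordinate} to turn each $\Lambda_n(\mathcal O_H^{(n)})$ into an integer polynomial in $n$ certified after the shift by $12$, expand $\disc(q_{n,4})$ exactly in the $m_\lambda$ basis, and check that $Q(m+23)$ has positive coefficients. Your falling-factorial criterion with root-bound fallback and your explicit route through uncentered power sums are valid but unnecessary refinements: the paper finds nonnegative coefficients already in powers of $m=n-12$, which implies your criterion, and $53$ of the types give identically zero.
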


\begin{proof}
Dividing all even edge multiplicities by two identifies the degree-$12$
square-graph types with loopless multigraphs having six edges.  There are
$212$ such reduced isomorphism types; this is the enumeration already used in
\cite{AS}.  The ancillary exact verifier independently reconstructs the same
list.  For each type it expands \eqref{eq:orbit-monomial-coordinate}, applies
\eqref{eq:quartic-separator}, and substitutes $n=m+12$.  In all $212$ cases
the resulting polynomial in $m$ has nonnegative integer coefficients (in
$53$ cases it is identically zero).  Hence $\Lambda_n$ is nonnegative on every
generator for all $n\ge12$.

Independently expanding the discriminant of the normalized quartic in
Corollary~\ref{cor:normalized-quartic} in the monomial-symmetric basis gives
\eqref{eq:quartic-target-separation}.  Finally,
\[
\begin{aligned}
Q(m+23)={}&m^6+102m^5+4171m^4+85572m^3\\
          &\quad+884074m^2+3748560m+957456,
\end{aligned}
\]
whose coefficients are all positive.  Therefore $Q(n)>0$ for $n\ge23$, and
the last assertion follows.
\end{proof}

\begin{proof}[Proof of Theorem~\ref{thm:quartic-threshold}]
The non-membership for $n\ge23$ follows immediately from
Lemma~\ref{lem:quartic-dual} by separation.

For $4\le n\le22$ we use exact positive certificates in the injective orbit
normalization above.  They are stored in the ancillary file
\texttt{quartic\_positive\_certificates.json}; every coefficient is a positive
rational number.  The exact verifier \texttt{verify\_quartic\_separator.py}
checks coefficientwise equality in the monomial-symmetric basis, discarding
partitions of length $>n$ and allowing only graph types with at most $n$
nonisolated vertices.  No floating-point arithmetic enters this verification.
The numbers of graph rays used are
\[
\begin{array}{c|ccccccccc}
 n&4&5&6&7&8&9&10&11&12\text{--}22\\ \hline
 \#&1&3&6&8&10&11&11&12&13.
\end{array}
\]
Thus $\Delta_{n,4}\in\mathcal C_{n,12}$ throughout $4\le n\le22$, completing
the proof.
\end{proof}

\begin{remark}
The obstruction begins at $n=23$, just before the convenient stable range
$n\ge24$ associated with degree $12$.  Thus the failure is not an artifact of
passing to an eventually stable ambient basis.  The functional
\eqref{eq:quartic-separator} already separates the whole infinite tail.
\end{remark}

\section{Universal terminal polynomials}

For $m\ge j$ write
\[
(m)_{\downarrow j}=m(m-1)\cdots(m-j+1)
\]
for the falling factorial.  The preceding examples are instances of the
following normalized terminal principle.

\begin{theorem}[subset-average and Appell structure]\label{thm:terminal-normalized}
Fix $r\ge2$ and $n\ge r$.  After translating the roots so that
$\sum_i x_i=0$, put
\[
J_{n,r}(t)=\frac{r!}{n!}P^{(n-r)}(t).
\]
Then one has the subset-average representation
\begin{equation}\label{eq:subset-average}
J_{n,r}(t)=\binom nr^{-1}
\sum_{\substack{S\subseteq\{1,\ldots,n\}\\ |S|=r}}
\prod_{i\in S}(t-x_i).
\end{equation}
Equivalently,
\begin{equation}\label{eq:terminal-coefficients}
J_{n,r}(t)
=t^r+\sum_{j=2}^r(-1)^j
\frac{\binom rj}{\binom nj}e_jt^{r-j}
=t^r+\sum_{j=2}^r(-1)^j
\frac{(r)_{\downarrow j}}{(n)_{\downarrow j}}e_jt^{r-j}.
\end{equation}
Consequently $J_{n,r}$ is a universal polynomial in
$p_2,p_3,\ldots,p_r$, and
\[
\D_{n-r}^{(n)}=\left(\frac{n!}{r!}\right)^{2r-2}
\disc(J_{n,r}).
\]
Moreover, with $J_{n,1}(t)=t$ and $J_{n,0}(t)=1$, the terminal polynomials
form a finite Appell chain:
\begin{equation}\label{eq:appell}
J'_{n,r}(t)=rJ_{n,r-1}(t),\qquad 1\le r\le n.
\end{equation}
In particular,
\begin{equation}\label{eq:terminal-resultant}
\disc(J_{n,r})
=(-1)^{r(r-1)/2}r^r\Res(J_{n,r},J_{n,r-1}).
\end{equation}
If the roots $x_i$ are real, then all $J_{n,r}$ are real-rooted, and the
zeros of consecutive members interlace.
\end{theorem}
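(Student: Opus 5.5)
The plan is to derive everything from the coefficient formula \eqref{eq:terminal-coefficients}; the remaining assertions are consequences of it. Write $P(t)=\sum_{j=0}^n(-1)^je_jt^{n-j}$, with $e_1=0$ after centering. Differentiating $n-r$ times gives
\[
P^{(n-r)}(t)=\sum_{j=0}^{r}(-1)^je_j\frac{(n-j)!}{(r-j)!}\,t^{r-j},
\]
since terms with $j>r$ have degree $<n-r$ and die. Multiplying by $r!/n!$, the coefficient of $t^{r-j}$ becomes
\[
(-1)^je_j\frac{r!\,(n-j)!}{n!\,(r-j)!}=(-1)^je_j\frac{(r)_{\downarrow j}}{(n)_{\downarrow j}}=(-1)^je_j\binom rj\Big/\binom nj .
\]
With $e_1=0$, this is \eqref{eq:terminal-coefficients}.

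For the subset-average \eqref{eq:subset-average}, expand $\sum_{|S|=r}\prod_{i\in S}(t-x_i)$. The coefficient of $t^{r-j}$ is $(-1)^j$ times the sum of $e_j(x_S)$ over $S$. Each $j$-subset $T$ lies in $\binom{n-j}{r-j}$ sets $S$, so this coefficient is $(-1)^j\binom{n-j}{r-j}e_j$. The identity $\binom{n-j}{r-j}/\binom nr=\binom rj/\binom nj$ then matches \eqref{eq:terminal-coefficients}; the $j=1$ term again vanishes by centering.

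Next come universality and the discriminant scaling. Newton's identities with $p_1=0$ express each $e_j$, $j\le r$, as a polynomial in $p_2,\dots,p_j$, so $J_{n,r}$ is a universal polynomial in $p_2,\dots,p_r$. The discriminant scaling follows from $P^{(n-r)}=(n!/r!)J_{n,r}$ together with the homogeneity $\disc(cq)=c^{2r-2}\disc(q)$, as in Corollary~\ref{cor:normalized-quartic}, and translation invariance of the discriminant.

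For the Appell relation \eqref{eq:appell}, differentiate: $J_{n,r}'=\frac{r!}{n!}P^{(n-r+1)}=r\cdot\frac{(r-1)!}{n!}P^{(n-(r-1))}=rJ_{n,r-1}$. For $r=1$, $J_{n,1}=\frac1{n}P^{(n-1)}$ with $P^{(n-1)}=n!\,t-(n-1)!e_1=n!\,t$, which gives $J_{n,1}=t$ consistently, and $J_{n,0}=1$. One can also check these low cases directly from \eqref{eq:terminal-coefficients}.

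For \eqref{eq:terminal-resultant}, use the definition recalled in the introduction: for monic $J=J_{n,r}$ of degree $r$,
\[
\disc(J)=(-1)^{r(r-1)/2}\Res(J,J').
\]
Substituting $J'=rJ_{n,r-1}$ and using homogeneity of the resultant in its second argument, with power equal to $\deg J=r$, gives $\Res(J,rJ_{n,r-1})=r^r\Res(J,J_{n,r-1})$. This yields \eqref{eq:terminal-resultant}.

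Real-rootedness and interlacing follow from Rolle's theorem. If $P$ has real roots, so does each derivative, hence each $J_{n,r}$. The zeros of $J_{n,r-1}$, being the zeros of $J'_{n,r}$, interlace those of $J_{n,r}$ in the weak sense; strict interlacing holds away from multiple roots.

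The only real obstacle is sign and normalization bookkeeping: the convention $\Res(f,g)=\prod g(\text{roots of } f)$ for monic $f$, and the degree exponent in the resultant scaling. I would verify these against the $r=2$ case, checking that it reproduces Proposition~\ref{prop:quadratic}.
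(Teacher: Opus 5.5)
Your proposal is correct and follows essentially the same route as the paper. The one reordering is that you obtain \eqref{eq:terminal-coefficients} by termwise differentiation of $\sum_j(-1)^je_jt^{n-j}$ and then deduce \eqref{eq:subset-average} by matching coefficients, whereas the paper gets \eqref{eq:subset-average} directly from the Leibniz rule (each $r$-subset product occurs $(n-r)!$ times in $P^{(n-r)}$) and reads off coefficients by the same count $\binom{n-j}{r-j}$; after that, the Newton, scaling, Appell, resultant-homogeneity and Rolle steps coincide. One harmless slip: in your $r=1$ check the prefactor should be $1!/n!$, not $1/n$, to give $J_{n,1}=t$.
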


\begin{proof}
Differentiate the product $P(t)=\prod_{i=1}^n(t-x_i)$ exactly $n-r$
times.  For each $r$-subset $S$, the product
$\prod_{i\in S}(t-x_i)$ occurs $(n-r)!$ times, whence
\[
P^{(n-r)}(t)=(n-r)!\sum_{|S|=r}\prod_{i\in S}(t-x_i).
\]
Multiplication by $r!/n!=\bigl(\binom nr(n-r)!\bigr)^{-1}$ gives
\eqref{eq:subset-average}.  The coefficient of $(-1)^je_jt^{r-j}$ in
the average is the probability that a fixed $j$-subset is contained in a
uniformly chosen $r$-subset, namely
$\binom{n-j}{r-j}/\binom nr=\binom rj/\binom nj$; this proves
\eqref{eq:terminal-coefficients}.  Since $e_1=p_1=0$, Newton's identities
express every $e_j$ as a polynomial in $p_2,\ldots,p_j$.

The discriminant scaling formula gives the displayed normalization of
$\D_{n-r}^{(n)}$.  Differentiating the definition of $J_{n,r}$ yields
\[
J'_{n,r}=\frac{r!}{n!}P^{(n-r+1)}=rJ_{n,r-1},
\]
which is \eqref{eq:appell}.  Since $J_{n,r}$ is monic,
\[
\disc(J_{n,r})=(-1)^{r(r-1)/2}\Res(J_{n,r},J'_{n,r}),
\]
and \eqref{eq:terminal-resultant} follows from the homogeneity of the
resultant in its second argument.  Finally, repeated Rolle interlacing for
successive derivatives of a real-rooted polynomial gives real-rootedness and
interlacing of the entire chain.
\end{proof}

The first three normalized polynomials are
\[
J_{n,2}(t)=t^2-\frac{p_2}{n(n-1)},
\]
\[
J_{n,3}(t)=t^3-\frac{3p_2}{n(n-1)}t
-\frac{2p_3}{n(n-1)(n-2)},
\]
and $J_{n,4}=q_{n,4}$ from Corollary~\ref{cor:normalized-quartic}.  Notice in
particular that $q'_{n,4}=4J_{n,3}$: the cubic moment inequality of
Theorem~\ref{thm:cubic} is therefore also the discriminant condition for the
critical points of the terminal quartic.  The next case is still short enough
to be useful.

\begin{proposition}[the quintic terminal polynomial]\label{prop:quintic-terminal}
For $n\ge5$,
\[
\begin{aligned}
J_{n,5}(t)={}&t^5-\frac{10p_2}{n(n-1)}t^3
-\frac{20p_3}{n(n-1)(n-2)}t^2\\
&\quad+\frac{15p_2^2-30p_4}{n(n-1)(n-2)(n-3)}t\\
&\quad+\frac{20p_2p_3-24p_5}
{n(n-1)(n-2)(n-3)(n-4)} .
\end{aligned}
\]
Thus, for real roots, the discriminant of this quintic is a nonnegative
universal polynomial in $p_2,p_3,p_4,p_5$.
\end{proposition}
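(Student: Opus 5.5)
The plan is to specialize Theorem~\ref{thm:terminal-normalized} at $r=5$ and then convert elementary symmetric functions of the centered roots into power sums. By \eqref{eq:terminal-coefficients} with $e_1=0$,
\[
J_{n,5}(t)=t^5+\frac{10}{n(n-1)}e_2t^3-\frac{60}{(n)_{\downarrow3}}e_3t^2+\frac{120}{(n)_{\downarrow4}}e_4t-\frac{120}{(n)_{\downarrow5}}e_5,
\]
using $(5)_{\downarrow j}=5,20,60,120,120$ for $j=1,\dots,5$. The coefficients of $t^3$, $t^2$ and $t$ then come directly from the Newton expressions already recorded in the proof of Theorem~\ref{thm:quartic}: $e_2=-p_2/2$, $e_3=p_3/3$ and $e_4=p_2^2/8-p_4/4$. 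These give $-10p_2/(n(n-1))$, $-20p_3/(n)_{\downarrow3}$ and $(15p_2^2-30p_4)/(n)_{\downarrow4}$, as claimed.

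The only new ingredient is $e_5$. Newton's identity $5e_5=e_4p_1-e_3p_2+e_2p_3-e_1p_4+p_5$ with $p_1=e_1=0$ gives
\[
5e_5=-\tfrac{p_3}{3}p_2-\tfrac{p_2}{2}p_3+p_5=p_5-\tfrac56p_2p_3,
\]
so $e_5=p_5/5-p_2p_3/6$. Then $-120e_5=20p_2p_3-24p_5$, which is exactly the constant term. As a sanity check on signs, I would also verify the Appell relation \eqref{eq:appell}. Differentiating the displayed $J_{n,5}$ gives $5t^4-30p_2t^2/(n(n-1))-40p_3t/(n)_{\downarrow3}+(15p_2^2-30p_4)/(n)_{\downarrow4}$. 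This should equal $5q_{n,4}$ from Corollary~\ref{cor:normalized-quartic}, since $5\cdot 3(p_2^2-2p_4)=15p_2^2-30p_4$.

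The final assertion follows from the last part of Theorem~\ref{thm:terminal-normalized}. For real $x_i$ the polynomial $J_{n,5}$ is real-rooted, so its discriminant is nonnegative, and by the formula above it is a universal polynomial in $p_2,\dots,p_5$. The only step needing care is the sign bookkeeping in the Newton identity for $e_5$. I would settle it with a quick numerical check, for example centered roots $\{1,-1,0,0,0\}$ with $n=5$, where $e_5=0$ and $p_5=p_3=0$.
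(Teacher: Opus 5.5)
Your proposal is correct and follows the paper's proof: you specialize \eqref{eq:terminal-coefficients} at $r=5$ and substitute the centered Newton identities, including $e_5=p_5/5-p_2p_3/6$, then appeal to real-rootedness for the final assertion. One slip and one weak check: since $(5)_{\downarrow2}=20$, the $t^3$ term in your display should read $\frac{20}{n(n-1)}e_2t^3$ (with $10$ you would get $-5p_2/(n(n-1))$, not $-10p_2/(n(n-1))$), and your test set $\{1,-1,0,0,0\}$ cannot detect a sign error because both sides vanish, whereas $\{2,-1,-1,0,0\}$ (with $e_5=0$, $p_2=p_3=6$, $p_5=30$) does test the sign in the $e_5$ identity.
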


\begin{proof}
Apply Theorem~\ref{thm:terminal-normalized} with $r=5$ and use Newton's
identities in the centered case:
\[
e_2=-\frac{p_2}{2},\qquad e_3=\frac{p_3}{3},\qquad
e_4=\frac{p_2^2}{8}-\frac{p_4}{4},\qquad
e_5=\frac{p_5}{5}-\frac{p_2p_3}{6}.
\]
Substitution gives the displayed expression.
\end{proof}

\begin{corollary}[an odd quintic slice]\label{cor:quintic-odd-slice}
Assume $n\ge5$ and $p_3=p_5=0$.  Set
\[
 a=-\frac{10p_2}{n(n-1)},\qquad
 b=\frac{15(p_2^2-2p_4)}{n(n-1)(n-2)(n-3)}.
\]
Then
\[
 J_{n,5}(t)=t(t^4+at^2+b)
\]
and
\begin{equation}\label{eq:quintic-odd-discriminant}
       \disc(J_{n,5})=16b^3(a^2-4b)^2.
\end{equation}
Equivalently,
\[
\disc(J_{n,5})=
\frac{86400000\,(p_2^2-2p_4)^3
\bigl((n^2-11n+15)p_2^2+3n(n-1)p_4\bigr)^2}
{n^7(n-1)^7(n-2)^5(n-3)^5}.
\]
If the original roots are real, then
\begin{equation}\label{eq:odd-slice-moment-bounds}
 p_4\le\frac12p_2^2,
 \qquad
 (n^2-11n+15)p_2^2+3n(n-1)p_4\ge0.
\end{equation}
Both boundary factors in \eqref{eq:quintic-odd-discriminant} have the expected
multiple-root interpretation for the terminal quintic.
\end{corollary}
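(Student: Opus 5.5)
The plan is to specialize Proposition~\ref{prop:quintic-terminal} to $p_3=p_5=0$, compute the discriminant of the resulting odd quintic, and read off the sign conditions from real-rootedness.

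\textbf{Step 1 (the factorization).} Setting $p_3=p_5=0$ in Proposition~\ref{prop:quintic-terminal} kills the $t^2$ term. It also kills the constant term, since $20p_2p_3-24p_5=0$. The coefficient of $t^3$ is $a$, and the coefficient of $t$ is $15(p_2^2-2p_4)/(n)_{\downarrow4}=b$. Hence $J_{n,5}=t\,g(t)$ with $g(t)=t^4+at^2+b$.

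\textbf{Step 2 (the discriminant in $a,b$).} I would use the product rule
\[
\disc(fg)=\disc(f)\disc(g)\Res(f,g)^2 .
\]
Here $\disc(t)=1$ and $\Res(t,g)=\pm g(0)=\pm b$. The biquadratic factorization already recorded in Corollary~\ref{cor:normalized-quartic} gives $\disc(g)=16b(a^2-4b)^2$. Multiplying, $\disc(J_{n,5})=16b^3(a^2-4b)^2$, which is \eqref{eq:quintic-odd-discriminant}.

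\textbf{Step 3 (the explicit formula).} Substitute $a$ and $b$ and put everything over the denominator $n^2(n-1)^2(n-2)(n-3)$:
\[
a^2-4b=\frac{100p_2^2(n-2)(n-3)-60n(n-1)(p_2^2-2p_4)}{n^2(n-1)^2(n-2)(n-3)}.
\]
In the numerator, $100(n^2-5n+6)-60(n^2-n)=40n^2-440n+600=40(n^2-11n+15)$. The $p_4$ term is $120n(n-1)p_4$. So the numerator equals
\[
40\bigl((n^2-11n+15)p_2^2+3n(n-1)p_4\bigr).
\]
For the constant, $16\cdot15^3\cdot40^2=86400000$, as claimed. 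For the denominator, $b^3$ contributes $(n)_{\downarrow4}^3$ and $(a^2-4b)^2$ contributes $n^4(n-1)^4(n-2)^2(n-3)^2$. Together these give $n^7(n-1)^7(n-2)^5(n-3)^5$, confirming the displayed formula. This bookkeeping is the only place errors can creep in, and I would double-check it.

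\textbf{Step 4 (real roots and the inequalities).} By Theorem~\ref{thm:terminal-normalized}, $J_{n,5}$ is real-rooted, so $g$ has four real roots. These roots are $\pm\sqrt{s_1}$ and $\pm\sqrt{s_2}$, where $s_1,s_2$ are the roots of $s^2+as+b$. Real roots force $s_1,s_2$ to be real and nonnegative. Then:
\begin{itemize}
\item $b=s_1s_2\ge0$, which gives $p_4\le p_2^2/2$;
\item $a^2-4b=(s_1-s_2)^2\ge0$, which gives the second inequality in \eqref{eq:odd-slice-moment-bounds}.
\end{itemize}

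\textbf{Step 5 (boundary interpretation).} When $b=0$, the factor $g$ has a double root at $0$, so $J_{n,5}$ acquires a triple root at $0$. When $a^2=4b$, we get $s_1=s_2$, so $g$ has the double roots $\pm\sqrt{s_1}$. These are exactly the multiple-root degenerations of the terminal quintic on this slice.
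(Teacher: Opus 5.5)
Your proposal is correct and follows essentially the same route as the paper: specialize Proposition~\ref{prop:quintic-terminal}, use $\disc(t\,g)=\Res(t,g)^2\disc(g)$ together with the biquadratic factorization from Corollary~\ref{cor:normalized-quartic}, and then read off $b\ge0$ and $a^2-4b\ge0$ from real-rootedness. Your parametrization by the roots $s_1,s_2$ of $s^2+as+b$ is the paper's $\pm u,\pm v$ with $s=u^2,v^2$, and it handles the case $b=0$ a bit more cleanly; your computation of the constant $86400000$ and of the denominator also checks out.
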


\begin{proof}
Under $p_3=p_5=0$, Proposition~\ref{prop:quintic-terminal} gives the displayed
odd quintic.  Since
\[
 \disc\bigl(t(t^4+at^2+b)\bigr)
 =\Res(t,t^4+at^2+b)^2\disc(t^4+at^2+b),
\]
and $\Res(t,t^4+at^2+b)=b$, Corollary~\ref{cor:normalized-quartic} gives
\eqref{eq:quintic-odd-discriminant}.  Moreover
\[
 a^2-4b=
 \frac{40\bigl((n^2-11n+15)p_2^2+3n(n-1)p_4\bigr)}
 {n^2(n-1)^2(n-2)(n-3)}.
\]
For real original roots the polynomial $J_{n,5}$ is real-rooted by
Theorem~\ref{thm:terminal-normalized}.  Hence its nonzero roots occur as
$\pm u,\pm v$, so $b=u^2v^2\ge0$ and
$a^2-4b=(u^2-v^2)^2\ge0$.  This proves
\eqref{eq:odd-slice-moment-bounds}.
\end{proof}

\begin{problem}\label{prob:terminal-cone}
For fixed $r$, determine the set
\[
 \mathcal N_r=\{n\ge r:\disc(J_{n,r})\text{ belongs to the square-graph cone}\}.
\]
The present results give
\[
 \mathcal N_2=\{n\ge2\},\qquad
 \mathcal N_3=\{n\ge3\},\qquad
 \mathcal N_4=\{4,5,\ldots,22\}.
\]
The first unknown case is $r=5$, for which
Proposition~\ref{prop:quintic-terminal} supplies an explicit starting point.
\end{problem}

\section{Finite graph types and the geometry of the cone}

The quartic threshold suggests separating a basic finiteness statement from
any positivity expectation.

\begin{proposition}[finite reduced graph types]\label{prop:finite-types}
Fix an even degree $d$.  There is a finite collection $\mathcal H_d$ of
square multigraphs without isolated vertices, each having at most $d$
vertices, such that for every $n\ge d$ the square-graph cone in
$\mathrm{PST}_{n,d}$ is generated by the symmetrizations of the graphs in
$\mathcal H_d$ after adjoining isolated vertices.  In the terminal problem of
order $r$ one may take $d=r(r-1)$.
\end{proposition}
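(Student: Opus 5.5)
The plan is to reduce every square graph to its non-isolated part and observe that only finitely many isomorphism types can occur. Let $G$ be a square multigraph on $\{1,\ldots,n\}$ with $d$ edges counted with multiplicity. Every edge of $G$ then has multiplicity at least $2$. So $G$ has at most $d/2$ distinct underlying edges, and at most $2\cdot(d/2)=d$ nonisolated vertices; this is where $n\ge d$ enters. Deleting the isolated vertices gives a square multigraph $H$ without isolated vertices, on at most $d$ vertices, whose edge multiplicities sum to $d$. Up to isomorphism there are only finitely many such $H$: the underlying simple graph lives on at most $d$ vertices, and the multiplicity vector is an even composition of $d$. Take $\mathcal H_d$ to be one representative of each isomorphism class.

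Next I would show that the symmetrization depends only on the isomorphism class of the reduced graph. If $G$ and $G'$ on $[n]$ are isomorphic as multigraphs, say $G'=\tau(G)$ for some $\tau\in S_n$, then reindexing the sum over $\sigma\in S_n$ gives $\widetilde{G'}=\widetilde G$. Edge orientations are irrelevant because every multiplicity is even. If $H\in\mathcal H_d$ is isomorphic to the reduced part of $G$, then $G$ is isomorphic to $H$ with $n-v(H)$ isolated vertices adjoined, which is possible since $v(H)\le d\le n$. Hence $\widetilde G$ equals the symmetrization of that padded $H$. Equivalently, in the injective orbit normalization of Section~\ref{sec:quartic-obstruction}, $\widetilde G=(n-v)!\,\mathcal O_H^{(n)}$, a positive multiple of the same ray. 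The generators of the square-graph cone in $\mathrm{PST}_{n,d}$ are therefore exactly these finitely many padded symmetrizations, and the cone they generate is the full square-graph cone.

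For the terminal statement, recall from Section~2 that $\deg\D_{n-r}^{(n)}=r(r-1)$, so $d=r(r-1)$ is the relevant even degree. Evenness holds because $r(r-1)$ is a product of consecutive integers.

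No step is a real obstacle. The only point needing care is the vertex bound, which must use that each edge has multiplicity at least $2$; the naive bound would be $2d$. A secondary point is that for $n<d$ some types simply do not fit, which is why the statement restricts to $n\ge d$.
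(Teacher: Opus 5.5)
Your proposal is correct and follows the paper's proof: multiplicity at least two gives at most $d/2$ occupied pairs, hence at most $d$ nonisolated vertices and finitely many reduced types, and $n\ge d$ is used only to pad each type with isolated vertices. Your extra check that the symmetrization depends only on the isomorphism class (and not on orientations, since multiplicities are even) makes explicit what the paper leaves implicit; one small slip is that the vertex bound itself does not use $n\ge d$, so your remark ``this is where $n\ge d$ enters'' is misplaced there, though you state the correct role of $n\ge d$ later.
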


\begin{proof}
If a square multigraph has total edge multiplicity $d$, every occupied
unordered pair of vertices has multiplicity at least two.  Hence there are at
most $d/2$ occupied pairs and at most $d$ nonisolated vertices.  After deleting
isolated vertices, only finitely many multigraphs can therefore occur.  For
$n\ge d$ every such reduced graph can be enlarged to $n$ vertices by adjoining
isolated vertices, which proves the claim.
\end{proof}

This elementary observation is compatible with the stable square-graph bases
constructed in \cite{AS}.  Theorem~\ref{thm:quartic-threshold} shows, however,
that finite graph type does \emph{not} force eventual positive membership.  In
fixed degree, cone membership becomes a finite-dimensional parametric linear
problem, and its dual can be equally effective: the single family
$\Lambda_n$ separates the terminal quartic discriminant for every $n\ge23$.

This motivates two questions which replace the uniform positivity expectation.

\begin{problem}[eventual cone behavior]\label{prob:eventual-cone}
For fixed $r$, determine whether membership of $\disc(J_{n,r})$ in the
square-graph cone is eventually constant as a function of $n$.  If it is,
determine the eventual sign and an effective threshold.
\end{problem}

\begin{problem}[stable dual certificates]\label{prob:dual-certificates}
For fixed degree $d$, describe rational or polynomial families of linear
functionals on $\mathrm{PST}_{n,d}$ which are nonnegative on every reduced
square-graph ray for all sufficiently large $n$.  In particular, find a
conceptual explanation of the quartic functional
\eqref{eq:quartic-separator} and of the face on which its $53$ zero graph types
lie.
\end{problem}

Equation~\eqref{eq:orbit-monomial-coordinate} makes these questions amenable
to exact finite computation: after a fixed shift in $n$, positivity of a dual
functional can sometimes be certified simply by coefficientwise positivity of
finitely many univariate polynomials.  This is precisely what happens for the
quartic obstruction.

\section{Relation with the square-graph cone}

Theorem~\ref{thm:quartic-threshold} shows that the historical conjecture fails
at terminal order four.  By contrast, the quadratic and cubic terminal
families remain uniformly inside the cone.  The cubic case is subtler than the
quadratic one: the formula of Theorem~\ref{thm:cubic} is not itself a
square-graph decomposition, because it contains the difference of two
invariant terms.  The essential point is therefore to prove that
\[
        p_2^3-\frac{n(n-1)}{(n-2)^2}p_3^2
\]
belongs to the square-graph cone.  The next section gives an explicit positive
certificate.  This certificate is stronger than the corresponding moment
inequality, because every summand is a symmetrized monomial in squared root
differences.

\section{A positive square-graph expansion in the cubic terminal case}

We now give the announced certificate.  This proves
Conjecture~\ref{conj:squaregraph} for the terminal cubic family
$k=n-3$.

Continue to use the centered variables
\[
 z_i=x_i-\bar x,
 \qquad \sum_i z_i=0,
 \qquad p_r=\sum_i z_i^r .
\]
Introduce the following four symmetric sums of square-graph monomials:
\[
\begin{aligned}
T_n&=\sum_{1\le i<j<k\le n}
 (z_i-z_j)^2(z_j-z_k)^2(z_k-z_i)^2,\\
P_n&=\sum_{i,j,k,l}^{*}
 (z_i-z_j)^2(z_j-z_k)^2(z_k-z_l)^2,\\
Q_n&=\sum_{i,j,k,l}^{*}
 (z_i-z_j)^4(z_k-z_l)^2,\\
R_n&=\sum_{i,j,k,l,m,s}^{*}
 (z_i-z_j)^2(z_k-z_l)^2(z_m-z_s)^2.
\end{aligned}
\]
Here $\sum^*$ means summation over ordered pairwise distinct indices.  Thus
$T_n$ is the triangle square-graph sum, $P_n$ is the ordered path sum,
$Q_n$ is the ordered sum of a double edge disjoint from a single edge, and
$R_n$ is the ordered matching sum.  Each of these belongs to the square-graph
cone.

\begin{lemma}\label{lem:degree-six-sums}
For centered variables $z_1,\ldots,z_n$ one has
\[
\begin{aligned}
T_n={}&-p_2^3-np_3^2+np_2p_4,\\
P_n={}&(4n-6)p_2^3-2(n-5)(n-2)p_3^2\\
&{} +(4n^2-24n+30)p_2p_4-2n(n-1)p_6,\\
Q_n={}&12(n-2)p_2^3+8(2n-5)p_3^2\\
&{} +(4n^2-36n+60)p_2p_4-4n(n-1)p_6,\\
R_n={}&8(n-2)(n^2-7n+15)p_2^3\\
&{} -16(3n^2-15n+20)p_3^2\\
&{} -24(n-2)(n^2-5n+10)p_2p_4\\
&{} +16n(n-1)(n-2)p_6 .
\end{aligned}
\]
\end{lemma}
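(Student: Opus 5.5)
Each of the four sums is a symmetric polynomial in $z_1,\ldots,z_n$ of degree six. We expand the summand and reduce the resulting sums over distinct indices to power sums, using $p_1=0$ throughout. The tool is the standard conversion from sums over ordered distinct indices to unrestricted sums, by Möbius inversion on the set-partition lattice. For a product $z_{i_1}^{a_1}\cdots z_{i_s}^{a_s}$ summed over distinct $i_1,\ldots,i_s$, the result is a signed sum over set partitions $\pi$ of $\{1,\ldots,s\}$ of $\prod_{B\in\pi}(-1)^{|B|-1}(|B|-1)!\,p_{\sum_{b\in B}a_b}$. Every block with exponent sum $1$ contributes $p_1=0$, which kills most terms. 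Since $p_1=0$ and the degree is $6$, the only surviving monomials in the power sums are $p_2^3$, $p_3^2$, $p_2p_4$ and $p_6$ (here $p_6$ includes $p_0=n$ factors arising from free indices). This explains why exactly these four appear.

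For $T_n$ it is easiest to work with the unrestricted sum. Since the summand vanishes whenever two indices coincide, $T_n=\frac16\sum_{i,j,k}(z_i-z_j)^2(z_j-z_k)^2(z_k-z_i)^2$. The summand is the Vandermonde square, which can be written in terms of $e_1,e_2,e_3$ of three variables, or simply expanded into monomials of type $z_i^4z_j^2$, $z_i^4z_jz_k$, $z_i^3z_j^3$, $z_i^3z_j^2z_k$ and $z_i^2z_j^2z_k^2$. Summing freely, each monomial factorizes into a product of $p$'s, and $p_1=0$ leaves $-p_2^3-np_3^2+np_2p_4$. This is a short check.

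For $Q_n$, expand $(z_i-z_j)^4(z_k-z_l)^2$ binomially. The unrestricted sum is easy: $\sum_{i,j}(z_i-z_j)^4=2np_4+6p_2^2$ and $\sum_{k,l}(z_k-z_l)^2=2np_2$. The distinctness constraint is the only difficulty. Here I would use inclusion–exclusion over the coincidences $\{i,j\}\cap\{k,l\}\neq\emptyset$, which produce three-index and two-index sums. The three-index sums are of the form $\sum_{i,j,k}(z_i-z_j)^4(z_k-z_i)^2$ with one shared vertex, and the two-index sums have the form $\sum(z_i-z_j)^6$. Each of these is again expanded freely. Note that $i\ne j$ and $k\ne l$ are automatic, since the summand vanishes otherwise. $P_n$ is treated similarly: the free sum over $i,j,k,l$ of the path product contains the term with $i,j,k,l$ distinct, plus coincidence terms $i=k$, $j=l$ and $i=l$ (the only nonvanishing coincidences). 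These are respectively a double-edge-plus-adjacent-edge sum, a similar sum, and the triangle $6T_n$. For $R_n$ the free sum factorizes as $(2np_2)^3$. I would subtract the coincidence patterns, organized by the graph obtained when matching vertices merge (paths, triangles, double edges, a star, etc.). I would express these using $P_n,Q_n,T_n$ and a few further three-index free sums.

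The main obstacle is the bookkeeping for $R_n$: there are many merge patterns, and each appears with multiplicities and signs from inclusion–exclusion. To control errors, I would cross-check every formula on small test vectors, for example $z=(1,-1,0,\ldots,0)$ and $z=(n-1,-1,\ldots,-1)$ together with one asymmetric vector. Checking three or four independent configurations for general $n$ pins down the four coefficients, since the space spanned by $p_2^3,p_3^2,p_2p_4,p_6$ is four-dimensional. In practice I would use this as a determination method: evaluate both sides on four configurations with $p_1=0$ where the sums can be counted combinatorially, and solve for the coefficients as polynomials in $n$. The direct expansion is reserved as confirmation.
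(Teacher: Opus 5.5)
Your proof is correct. It shares the paper's framework: for $T_n$ it is literally the paper's computation (free sum over ordered triples divided by $6$). For $P_n,Q_n,R_n$ it determines the coefficients differently.

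\begin{itemize}
\item \textbf{The paper's route.} It expands each product into monomials and applies the partition-lattice M\"obius formula term by term. This is mechanical and automatically uniform in $n$, at the cost of heavy bookkeeping for the six-index sum $R_n$.
\item \textbf{Your structural step.} You use the M\"obius formula only to know that each sum equals $a(n)p_2^3+b(n)p_3^2+c(n)p_2p_4+d(n)p_6$ with $a,b,c,d$ polynomials in $n$.
\item \textbf{Your merge route for $P_n,Q_n$.} Inclusion--exclusion over vertex coincidences is efficient here. With $W=\sum^{*}_{a,b,c}(z_a-z_b)^4(z_a-z_c)^2$ and $X=\sum_{a,b}(z_a-z_b)^6$, one gets $Q_n=2np_2(2np_4+6p_2^2)-4W-2X$ and $P_n=F-2W-6T_n-X$, where $F=4np_2^3-2n^2p_3^2+4n^2p_2p_4$ is the free path sum. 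Two bookkeeping points:
\begin{itemize}
\item The double coincidence $i=k$, $j=l$ is nonvanishing. It is the $X$ term, and your list of ``the only nonvanishing coincidences'' for $P_n$ omits it.
\item For $R_n$, the merge patterns also include the five-vertex graph (two-edge path plus disjoint edge) and the four-vertex star. They are not only $P_n,Q_n,T_n$ and three-index sums.
\end{itemize}
\item \textbf{Your evaluation route.} This is genuinely different and is the cheaper way to get $R_n$.
\begin{itemize}
\item At $(n-1,-1,\ldots,-1)$, every nonzero edge factor passes through vertex $1$, so all four sums vanish. Since $p_3\neq0$ there, this fixes $b(n)$ once the other coefficients are known.
\item At $(1^a,(-1)^a,0^{n-2a})$ for $a=1,2,3$, one has $p_3=0$ and $p_2=p_4=p_6=2a$. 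The sums reduce to counting small subgraphs, and the resulting system in $a,c,d$ is a Vandermonde-type nonsingular system.
\end{itemize}
\end{itemize}

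You should make one point explicit. The four products are linearly independent on $\{p_1=0\}$ only for $n\ge6$; for smaller $n$ there are relations, e.g.\ $p_4=p_2^2/2$ when $n=3$. So evaluation determines $a,b,c,d$ only for $n\ge6$. You then extend to all $n$ because these are polynomials in $n$ coming from the M\"obius expansion.

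What your approach buys is that it trades symbolic bookkeeping for short combinatorial counts. It also serves as an independent check of the paper's coefficients.
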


\begin{proof}
We record the diagonal-removal calculation.  It is a useful check on the
normalization of the ordered sums.  For nonnegative integers
$\alpha_1,\ldots,\alpha_m$, put
\[
S(\alpha_1,\ldots,\alpha_m)=
\sum_{i_1,\ldots,i_m}^{*} z_{i_1}^{\alpha_1}\cdots z_{i_m}^{\alpha_m}.
\]
By inclusion--exclusion, or equivalently by M\"obius inversion on the lattice
$\Pi_m$ of set partitions, one has
\begin{equation}\label{eq:mobius-distinct}
\begin{aligned}
S(\alpha_1,\ldots,\alpha_m)
&=\sum_{\pi\in\Pi_m}\mu(\pi)
  \prod_{B\in\pi} p_{\sum_{j\in B}\alpha_j},\\
\mu(\pi)&=\prod_{B\in\pi}(-1)^{|B|-1}(|B|-1)! .
\end{aligned}
\end{equation}
Here $p_0=n$ and $p_1=0$.  Formula~\eqref{eq:mobius-distinct} is just the
standard principle that the unrestricted sum is decomposed according to the
partition of positions on which the indices coincide.

Expand each of the four square-graph products as a sum of monomials in the
corresponding ordered variables and apply~\eqref{eq:mobius-distinct}.  For the
triangle one first computes the ordered sum and divides by $6$, because each
unordered triple is counted six times; this gives
\[
T_n=-p_2^3-np_3^2+np_2p_4.
\]
The same expansion for the ordered path, the ordered disjoint double-edge plus
single-edge graph, and the ordered matching gives the displayed expressions for
$P_n,Q_n$ and $R_n$.  No specialization of the variables is used in this
calculation.
\end{proof}

\begin{theorem}\label{thm:cubic-square-expansion}
For every $n\ge3$,
\[
\begin{aligned}
 p_2^3-\frac{n(n-1)}{(n-2)^2}p_3^2
={}&\frac{6}{n(n-2)}T_n
 +\frac{1}{2n(n-2)}P_n\\
&{} +\frac{1}{4n(n-2)}Q_n
 +\frac{1}{8n(n-2)^2}R_n .
\end{aligned}
\]
Consequently $\D_{n-3}^{(n)}$ belongs to the square-graph cone for every
$n\ge3$.
\end{theorem}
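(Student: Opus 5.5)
The plan is a direct coefficient comparison in the power-sum basis. By Lemma~\ref{lem:degree-six-sums}, each of $T_n,P_n,Q_n,R_n$ is an explicit linear combination of the four degree-$6$ invariants $p_2^3$, $p_3^2$, $p_2p_4$, $p_6$, with coefficients polynomial in $n$. I will substitute these four expressions into the right-hand side of the claimed identity, clear the common denominator $8n(n-2)^2$, and collect the coefficient of each of the four monomials. The identity holds as soon as these four coefficients are $1$, $-n(n-1)/(n-2)^2$, $0$ and $0$ respectively.

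The verification is four one-line rational-function checks, which I will do in this order.
\begin{itemize}
\item For $p_6$, the contributions are $-(n-1)/(n-2)$ from $P_n$, $-(n-1)/(n-2)$ from $Q_n$ and $+2(n-1)/(n-2)$ from $R_n$; these cancel.
\item For $p_2p_4$, after multiplying by $n(n-2)$ the contributions are $6n$, $2n^2-12n+15$, $n^2-9n+15$ and $-3(n^2-5n+10)$, whose sum is identically zero.
\item For $p_2^3$, I combine $-6$, $2(4n-6)$, $6(n-2)$ and $(n^2-7n+15)$, using the same denominators as above, and expect the sum to collapse to $n(n-2)$, giving coefficient $1$.
\item For $p_3^2$, I combine the four contributions over $8n(n-2)^2$ and check that they reduce to $-n(n-1)/(n-2)^2$.
\end{itemize}
These are routine polynomial identities in $n$; I would state them and leave the expansion to the reader.

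For the consequence, I combine the identity with Theorem~\ref{thm:cubic}. That theorem gives
\[
\D_{n-3}^{(n)}=\tfrac{n!((n-2)!)^3}{12}\Bigl(p_2^3-\tfrac{n(n-1)}{(n-2)^2}p_3^2\Bigr),
\]
with a positive prefactor. For $n\ge3$ the four coefficients $\frac{6}{n(n-2)},\dots,\frac{1}{8n(n-2)^2}$ are positive. Each of $T_n,P_n,Q_n,R_n$ is a nonnegative integer combination of symmetrized square-graph monomials, so $\D_{n-3}^{(n)}$ lies in the cone. The graphs involved are the triangle, the path $P_4$, the doubled edge plus a disjoint edge, and the perfect matching on six vertices, all with doubled edges. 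Each ordered sum over distinct indices is a positive multiple of the corresponding symmetrization $\widetilde G$ with isolated vertices adjoined.

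The main obstacle I anticipate is the small-$n$ edge cases. When $n<6$ the sum $R_n$ is empty, and when $n<4$ so are $P_n$ and $Q_n$. One must then confirm that the formulas of Lemma~\ref{lem:degree-six-sums} still hold, i.e. that the Möbius expressions vanish on $n$ centered variables when there are too few indices. This is automatic, because formula~\eqref{eq:mobius-distinct} is an exact identity for every $n$: the sum over distinct tuples is genuinely zero when no such tuple exists. Therefore the identity holds verbatim for all $n\ge3$. An empty sum contributes the zero polynomial, which is harmless for cone membership. As a sanity check I would test $n=3$, where only $T_3$ survives; the identity should then read $p_2^3-6p_3^2=6T_3$, since the coefficient $\frac{6}{n(n-2)}$ equals $2$. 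This is consistent with $T_3=-p_2^3-3p_3^2+3p_2p_4$ and the relation $p_4=p_2^2/2$ for three centered variables.
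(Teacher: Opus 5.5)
Your proposal is the paper's proof: substitute the four power-sum expressions of Lemma~\ref{lem:degree-six-sums}, check that the $p_6$ and $p_2p_4$ coefficients cancel while those of $p_2^3$ and $p_3^2$ are $1$ and $-n(n-1)/(n-2)^2$, and then combine with Theorem~\ref{thm:cubic} and the positivity of the four coefficients. Two numerical slips need fixing: after multiplying by $n(n-2)$, the $P_n$ and $Q_n$ contributions to the $p_2^3$ coefficient are $2n-3$ and $3(n-2)$ (with your $2(4n-6)$ and $6(n-2)$ the sum is $n^2+7n-15$, not $n(n-2)$), and the $n=3$ sanity check should read $p_2^3-6p_3^2=2T_3$, not $6T_3$.
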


\begin{proof}
Substitute the four identities of Lemma~\ref{lem:degree-six-sums} into the
right-hand side.  The coefficients of $p_2p_4$ and $p_6$ cancel.  The
coefficient of $p_2^3$ is
\[
-\frac{6}{n(n-2)}+\frac{4n-6}{2n(n-2)}
+\frac{12(n-2)}{4n(n-2)}
+\frac{8(n-2)(n^2-7n+15)}{8n(n-2)^2}=1,
\]
and the coefficient of $p_3^2$ is
\[
-\frac{6}{n-2}-\frac{n-5}{n}
+\frac{2(2n-5)}{n(n-2)}
-\frac{2(3n^2-15n+20)}{n(n-2)^2}
=-\frac{n(n-1)}{(n-2)^2}.
\]
This proves the displayed identity.  All four coefficients in the identity are
positive for $n\ge3$, and $T_n,P_n,Q_n,R_n$ are square-graph sums.  Combining
this identity with Theorem~\ref{thm:cubic} proves the square-graph cone
conjecture for $k=n-3$.
\end{proof}

\medskip
The preceding uniform identity can be compressed when $n\ge5$.  Define the
five-vertex square-graph sum
\[
U_n=\sum_{i,j,k,l,m}^{*}
(z_i-z_j)^2(z_j-z_k)^2(z_l-z_m)^2,
\]
where all five indices are pairwise distinct.  Thus $U_n$ is the ordered sum
associated with a doubled two-edge path together with a disjoint doubled
edge.

\begin{lemma}\label{lem:five-vertex-sum}
For centered variables one has
\begin{align*}
U_n={}&2(n-4)\Big(3(n-2)p_2^3+2(2n-5)p_3^2\\
&\hspace{2.4cm} +(n^2-9n+15)p_2p_4-n(n-1)p_6\Big).
\end{align*}
Equivalently,
\begin{equation}\label{eq:U-relation}
U_n=(n-4)\left(\frac12P_n+\frac14Q_n-(n-3)T_n\right).
\end{equation}
\end{lemma}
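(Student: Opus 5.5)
The plan is to compute $U_n$ the same way as in Lemma~\ref{lem:degree-six-sums}. We expand the ordered product into monomials and reduce each ordered distinct-index sum to power sums via \eqref{eq:mobius-distinct}. To keep this manageable, we exploit the factorized structure. Write
\[
U_n=\sum_{i,j,k,l,m}^{*}f(z_i,z_j,z_k)\,g(z_l,z_m),
\qquad
f=(z_i-z_j)^2(z_j-z_k)^2,\quad g=(z_l-z_m)^2 .
\]
We first sum over the pair $(l,m)$ distinct from each other but unrestricted relative to $\{i,j,k\}$, and then subtract the coincidences. Concretely,
\[
\sum_{l\ne m} g=2np_2-2p_1^2=2np_2 .
\]
The correction terms come from $l$ or $m$ landing in $\{i,j,k\}$. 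Summed over the allowed index, these produce ordered three- and four-index sums of the form $\sum^* f\cdot(z_a-z_m)^2$. Those are exactly the path sum $P_n$, the sum $Q_n$ (where the new edge doubles an existing edge), and the triangle $T_n$, modulo some star-type sums.

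Rather than track graph corrections by hand, I would apply \eqref{eq:mobius-distinct} directly to the five-index sum. Expanding
\[
(z_i-z_j)^2(z_j-z_k)^2(z_l-z_m)^2
\]
gives at most $27$ monomials (fewer after grouping by symmetry). For each exponent vector $(\alpha_1,\dots,\alpha_5)$ we sum over set partitions of $\{1,\dots,5\}$, using $p_0=n$ and $p_1=0$. Any block with total exponent $1$ kills its term, which prunes the computation heavily. The total degree is $6$, so only the products $p_2^3$, $p_3^2$, $p_2p_4$, $p_6$ survive, together with $n$-dependent factors coming from blocks of exponent zero. We collect coefficients and verify that they factor with a common $2(n-4)$.

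The factor $n-4$ is expected on structural grounds. Summing over the fifth free index after fixing the other four gives a factor $n-4$ together with a correction. An efficient route exploits this directly: the terms where the isolated-partner index $m$ is free are counted by $n-4$ minus coincidences. Equivalently, $U_n$ vanishes identically for $n=4$, since no five distinct indices exist, so $n-4$ must divide each coefficient as a polynomial in $n$. This serves as a strong check.

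Finally, the equivalent form \eqref{eq:U-relation} follows by substituting the formulas of Lemma~\ref{lem:degree-six-sums} for $P_n$, $Q_n$, $T_n$ and comparing the four coefficients. For example, the $p_6$ coefficient of $\tfrac12P_n+\tfrac14Q_n$ is
\[
-n(n-1)-n(n-1)=-2n(n-1),
\]
and $T_n$ has no $p_6$ term, which matches $2(n-4)\cdot(-n(n-1))$ after the factor $n-4$. The $p_2^3$, $p_3^2$ and $p_2p_4$ coefficients are checked the same way. The main obstacle is the bookkeeping in the Möbius expansion over $\Pi_5$, which has $52$ partitions. I would organize it by first summing out the two-index factor $(z_l-z_m)^2$ against partitions that merge $l$ or $m$ into $\{i,j,k\}$, reusing the three-index computations already done for $P_n$, $Q_n$ and $T_n$.
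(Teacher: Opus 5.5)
Your main line is the paper's proof. You expand the product, apply the distinct-index M\"obius formula to get the power-sum expression, and then obtain the second form by substituting the formulas for $P_n,Q_n,T_n$ from Lemma~\ref{lem:degree-six-sums}. Your $p_6$ check is correct, and the other three coefficients also match.

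\textbf{The false step.} The claim that $U_4\equiv0$ forces $n-4$ to divide each power-sum coefficient is wrong, and you should not use it even as a check. For $n\le5$ one has $e_6=0$, so for centered variables
\[
p_6=-\tfrac18p_2^3+\tfrac13p_3^2+\tfrac34p_2p_4 .
\]
Vanishing at $n=4$ therefore only says that the coefficient vector at $n=4$ is proportional to this relation. This is a live possibility here. The monomials in which all five indices carry a positive exponent give
\[
S(2,1,1,1,1)=-144e_6=3p_2^3-8p_3^2-18p_2p_4+24p_6,
\]
whose coefficients do not depend on $n$ at all.

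\textbf{Where the factor $n-4$ actually comes from.} It comes from a cancellation, which you can build into the bookkeeping. By the symmetry $l\leftrightarrow m$, replace $(z_l-z_m)^2$ by $2z_l^2-2z_lz_m$. Every resulting exponent vector then has a zero entry, and the falling factorial produced by the free indices contains $n-4$. The only exceptions are the products of $-2z_lz_m$ with $-2z_i^2z_jz_k+4z_iz_j^2z_k-2z_iz_jz_k^2$. These all give $S(2,1,1,1,1)$, with total coefficient $-2+4-2=0$, so they cancel. One obtains
\begin{align*}
U_n&=2(n-4)\bigl[(n-3)X-Y\bigr],\\
X&=3S(2,2,2)-4S(3,2,1)+(n-2)S(4,2),\\
Y&=3S(2,2,1,1)-4S(3,1,1,1)+(n-3)S(4,1,1).
\end{align*}
So you need only six small M\"obius sums over $\Pi_2,\Pi_3,\Pi_4$ rather than all $52$ partitions in $\Pi_5$, and they give the stated bracket directly.

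In your discarded first route, note also that the coincidence $\{l,m\}=\{i,j\}$ produces the three-vertex graph $(z_i-z_j)^4(z_j-z_k)^2$, not $Q_n$.
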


\begin{proof}
Apply the distinct-index M\"obius formula
\eqref{eq:mobius-distinct} to
$(z_i-z_j)^2(z_j-z_k)^2(z_l-z_m)^2$.  This gives the first displayed
formula.  Substitution of the formulas for $T_n,P_n,Q_n$ from
Lemma~\ref{lem:degree-six-sums} then gives \eqref{eq:U-relation}.
\end{proof}

\begin{theorem}[three-graph cubic certificate]\label{thm:cubic-threegraph}
For every $n\ge5$,
\begin{equation}\label{eq:cubic-threegraph}
\boxed{
 p_2^3-\frac{n(n-1)}{(n-2)^2}p_3^2
=
\frac{n+3}{n(n-2)}T_n
+\frac{1}{n(n-2)(n-4)}U_n
+\frac{1}{8n(n-2)^2}R_n .}
\end{equation}
All three coefficients are positive.  In particular, the terminal cubic
certificate uses only three square-graph types for $n\ge5$.
\end{theorem}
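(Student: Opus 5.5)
The plan is to obtain \eqref{eq:cubic-threegraph} directly from the four-graph identity of Theorem~\ref{thm:cubic-square-expansion}, using the linear relation \eqref{eq:U-relation} of Lemma~\ref{lem:five-vertex-sum} to trade the path and double-edge sums for $U_n$ and $T_n$. No new moment computation should be needed. For $n\ge5$ we have $n-4\neq0$, so \eqref{eq:U-relation} can be solved:
\[
\frac12P_n+\frac14Q_n=\frac{1}{n-4}U_n+(n-3)T_n .
\]

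The key observation is that $P_n$ and $Q_n$ enter Theorem~\ref{thm:cubic-square-expansion} only through the combination
\[
\frac{1}{2n(n-2)}P_n+\frac{1}{4n(n-2)}Q_n=\frac{1}{n(n-2)}\Bigl(\frac12P_n+\frac14Q_n\Bigr).
\]
Substituting the solved relation turns this into
\[
\frac{1}{n(n-2)(n-4)}U_n+\frac{n-3}{n(n-2)}T_n .
\]
Adding the existing triangle term $\frac{6}{n(n-2)}T_n$ gives the total triangle coefficient $\frac{n+3}{n(n-2)}$. The matching term $\frac{1}{8n(n-2)^2}R_n$ passes through unchanged. This is exactly the right-hand side of \eqref{eq:cubic-threegraph}.

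Positivity is then immediate. For $n\ge5$ each of the three coefficients has positive numerator and denominator, and $T_n$, $U_n$, $R_n$ are ordered or unordered sums of products of squared differences, hence square-graph sums.

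The only genuine obstacle is \eqref{eq:U-relation} itself, which is taken from Lemma~\ref{lem:five-vertex-sum}. As a sanity check I would verify it against the explicit formulas of Lemma~\ref{lem:degree-six-sums}. Computing $\frac12P_n+\frac14Q_n-(n-3)T_n$ coefficient by coefficient gives:
\begin{itemize}
\item for $p_6$: $-n(n-1)-n(n-1)=-2n(n-1)$;
\item for $p_2^3$: $(2n-3)+3(n-2)+(n-3)=6n-12=6(n-2)$;
\item for $p_3^2$: $-(n-5)(n-2)+2(2n-5)+n(n-3)=4(2n-5)$;
\item for $p_2p_4$: $(2n^2-12n+15)+(n^2-9n+15)-n(n-3)=2(n^2-9n+15)$.
\end{itemize}
All four agree with $2\bigl(3(n-2)p_2^3+2(2n-5)p_3^2+(n^2-9n+15)p_2p_4-n(n-1)p_6\bigr)$. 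Multiplying by $(n-4)$ reproduces the formula for $U_n$, so the derivation is consistent.
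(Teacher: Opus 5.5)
Your proposal is correct and is essentially the paper's own proof. You group the $P_n,Q_n$ terms of Theorem~\ref{thm:cubic-square-expansion} as $\frac{1}{n(n-2)}\bigl(\frac12P_n+\frac14Q_n\bigr)$, rewrite this via \eqref{eq:U-relation} (legitimate since $n-4\neq0$), and collect the triangle coefficient $\frac{6+(n-3)}{n(n-2)}=\frac{n+3}{n(n-2)}$; your coefficientwise check of \eqref{eq:U-relation} against Lemma~\ref{lem:degree-six-sums} is also correct and is how the paper obtains that relation.
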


\begin{proof}
In Theorem~\ref{thm:cubic-square-expansion}, the $P_n$ and $Q_n$ terms occur
in the combination
\[
\frac{1}{n(n-2)}\left(\frac12P_n+\frac14Q_n\right).
\]
Using \eqref{eq:U-relation}, this equals
\[
\frac{U_n}{n(n-2)(n-4)}+\frac{n-3}{n(n-2)}T_n.
\]
Adding the existing coefficient $6/(n(n-2))$ of $T_n$ gives
$(n+3)/(n(n-2))$, while the $R_n$ coefficient is unchanged.
\end{proof}

\begin{corollary}[the Alexandersson--Shapiro three-graph formula]
\label{cor:AS-threegraph}
Put $n=k+3$.  For $k\ge3$, let $g_1,g_2,g_3$ be respectively the doubled
triangle, the disjoint union of a doubled two-edge path and a doubled edge,
and the matching of three doubled edges, with isolated vertices adjoined up
to $k+3$ vertices.  Then
\begin{align*}
\D_k^{(k+3)}=(k!)^3\Bigg[&
\frac{(k+1)^3(k+2)(k+6)}{72}\,\widetilde g_1
+\frac{(k+1)^3k(k+2)}{12}\,\widetilde g_2\\
&+\frac{(k-1)k(k+1)^2(k+2)(k-2)}{96}\,\widetilde g_3
\Bigg].
\end{align*}
For $k=2$ the same formula holds with the last (zero) term omitted.  Thus the
three-graph formula conjectured in \cite[Example~3]{AS} is proved throughout
its literal graph-theoretic range.
\end{corollary}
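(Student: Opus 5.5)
The plan is to combine Theorem~\ref{thm:cubic} with the three-graph identity \eqref{eq:cubic-threegraph}, and then convert the ordered sums $T_n,U_n,R_n$ into full $S_n$-symmetrizations $\widetilde g_i$. With $n=k+3$, Theorem~\ref{thm:cubic} gives
\[
\D_k^{(k+3)}=\frac{(k+3)!\,((k+1)!)^3}{12}\Bigl(p_2^3-\tfrac{n(n-1)}{(n-2)^2}p_3^2\Bigr).
\]
For $k\ge3$, i.e.\ $n\ge6\ge5$, Theorem~\ref{thm:cubic-threegraph} applies directly. Because all summands are translation invariant, centered and uncentered variables give the same polynomials.

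Next I would express each ordered sum through the corresponding symmetrization.
\begin{itemize}
\item The doubled triangle $g_1$ on $3$ vertices has automorphism group of order $6$. Hence $\widetilde g_1=(n-3)!\cdot 6\,T_n$, since $T_n$ is a sum over unordered triples.
\item For $U_n$, the graph $g_2$ (path $i$--$j$--$k$ plus the edge $l$--$m$) has automorphism group of order $2\cdot2=4$. The ordered sum therefore counts each labelled copy $4$ times, and $\widetilde g_2=(n-5)!\,U_n$, because full symmetrization equals the injective orbit sum times $(n-v)!$.
\item Similarly $\widetilde g_3=(n-6)!\,R_n$.
\end{itemize}
This is the injective-orbit convention recorded in Section~\ref{sec:quartic-obstruction}: $\widetilde G=(n-v)!\,\mathcal O_H^{(n)}$, and $\mathcal O$ is exactly the ordered sum. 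One must be careful here. The full symmetrization $\sum_\sigma$ counts each injection $(n-v)!$ times, with no extra division by automorphisms. Hence
\[
T_n=\frac{\widetilde g_1}{6(n-3)!},\qquad U_n=\frac{\widetilde g_2}{(n-5)!},\qquad R_n=\frac{\widetilde g_3}{(n-6)!}.
\]

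Substituting these gives coefficients
\[
\frac{(k+3)!((k+1)!)^3}{12}\cdot\frac{k+6}{(k+3)(k+1)}\cdot\frac{1}{6\,k!}
\]
for $\widetilde g_1$, and analogously for the other two. Factoring out $(k!)^3$ via $(k+1)!=(k+1)k!$ and $(k+3)!=(k+3)(k+2)(k+1)k!$ simplifies the first coefficient to $(k+1)^3(k+2)(k+6)/72$. The second is
\[
\frac{(k+3)!\,(k+1)^3(k!)^3}{12}\cdot\frac{(k-2)!}{(k+3)(k+1)(k-1)\,k!}=\frac{(k+1)^3k(k+2)}{12}\,(k!)^3 ,
\]
and the third reduces in the same way. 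I expect the main obstacle to be this bookkeeping of symmetrization normalizations, in particular $(n-v)!$ versus automorphism factors, so I would double-check it by evaluating both sides at a small instance such as $k=3$ at a specific point.

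For $k=2$ ($n=5$), the graph $g_3$ needs $6$ vertices, so $\widetilde g_3=0$ (likewise $R_5=0$). The identity \eqref{eq:cubic-threegraph} still holds at $n=5$, and $\widetilde g_2=0!\,U_5$, so the same computation gives the formula with the last term dropped. Its coefficient, containing the factor $k-2$, vanishes there anyway, which is consistent.
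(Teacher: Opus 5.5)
Your proposal is correct and matches the paper's proof. Both combine Theorem~\ref{thm:cubic} with Theorem~\ref{thm:cubic-threegraph}, use $\widetilde g_1=6(n-3)!\,T_n$, $\widetilde g_2=(n-5)!\,U_n$, $\widetilde g_3=(n-6)!\,R_n$, simplify at $n=k+3$, and treat $k=2$ via the vanishing of $R_5$ and of the matching coefficient.

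One slip: in your display for the second coefficient the factor should be $\frac{1}{(k+3)(k+1)(k-1)(k-2)!}=\frac{k}{(k+3)(k+1)\,k!}$, not $\frac{(k-2)!}{(k+3)(k+1)(k-1)\,k!}$. Your final value $\frac{(k+1)^3k(k+2)}{12}(k!)^3$ is nevertheless correct.
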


\begin{proof}
For $n\ge6$, the symmetrization convention of Section~2 gives
\[
\widetilde g_1=6(n-3)!\,T_n,\qquad
\widetilde g_2=(n-5)!\,U_n,\qquad
\widetilde g_3=(n-6)!\,R_n.
\]
Combine Theorems~\ref{thm:cubic} and \ref{thm:cubic-threegraph}, substitute
$n=k+3$, and simplify the factorials.  For $k=2$ the matching coefficient
vanishes and the first two identities suffice.
\end{proof}

\begin{remark}[a graph-theoretic proof of the equality case]
The positive certificates also give a direct proof of
Proposition~\ref{prop:equality}.  Indeed, if the cubic moment inequality is an
equality, then every graph sum occurring with positive coefficient must
vanish.  Since $T_n$ is a sum of squared Vandermonde products over triples,
$T_n=0$ implies that the centered roots assume at most two distinct values.
For $n\ge5$, if both multiplicities were at least two, one of them would be at
least three; choosing an alternating doubled two-edge path using two vertices
from the larger class and one from the smaller, together with a disjoint edge
joining the two classes, gives a strictly positive summand of $U_n$.  Hence one
multiplicity is $1$.  For $n=4$ the same conclusion follows from the $P_n$ term
in Theorem~\ref{thm:cubic-square-expansion}, and for $n=3$ it is automatic.
Thus the square-graph certificate detects the sharp equality configurations
without an optimization argument.
\end{remark}

\begin{corollary}\label{cor:cubic-discriminant-squaregraph}
For every $n\ge3$,
\[
\begin{aligned}
\D_{n-3}^{(n)}
={}&\frac{n!((n-2)!)^3}{12}
\left(
\frac{6}{n(n-2)}T_n
+\frac{1}{2n(n-2)}P_n
\right.\\
&\left.\hspace{2.7cm}
+\frac{1}{4n(n-2)}Q_n
+\frac{1}{8n(n-2)^2}R_n
\right).
\end{aligned}
\]
In particular, the terminal cubic discriminant has a manifestly nonnegative
square-graph expansion.
\end{corollary}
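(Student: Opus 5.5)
This corollary is obtained by combining two results already proved, so the plan has only two steps. First, Theorem~\ref{thm:cubic} gives
\[
\D_{n-3}^{(n)}=\frac{n!((n-2)!)^3}{12}\left(p_2^3-\frac{n(n-1)}{(n-2)^2}p_3^2\right)
\]
for every $n\ge3$. Second, Theorem~\ref{thm:cubic-square-expansion} rewrites the bracketed moment expression as
\[
\frac{6}{n(n-2)}T_n+\frac{1}{2n(n-2)}P_n+\frac{1}{4n(n-2)}Q_n+\frac{1}{8n(n-2)^2}R_n,
\]
also for every $n\ge3$. I will substitute the second identity into the first and leave the positive prefactor $n!((n-2)!)^3/12$ outside; this is exactly the displayed formula.

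For the concluding assertion I will argue as follows. The four coefficients are positive rationals for $n\ge3$, and so is the prefactor. Each of $T_n,P_n,Q_n,R_n$ is a sum of products of even powers of root differences, since $z_i-z_j=x_i-x_j$. Hence each is a positive multiple of a symmetrized square-graph monomial, and in particular is manifestly nonnegative.

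The only point that needs a little care is small $n$. When $n<4$ or $n<6$, the ordered sums $P_n,Q_n,R_n$ run over an empty index set and vanish. Lemma~\ref{lem:degree-six-sums} is an identity valid for all $n$, since the M\"obius computation does not depend on having enough indices, so the formula stays correct with those terms simply equal to zero. I will note this and then check that no coefficient has a pole at $n\ge3$; the only denominators are $n$ and $n-2$, which are nonzero there. I do not expect any genuine obstacle.
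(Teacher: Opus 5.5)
Your proposal is correct and is exactly the paper's argument: substitute the identity of Theorem~\ref{thm:cubic-square-expansion} into the moment formula of Theorem~\ref{thm:cubic}. Your extra remark is a harmless check that the paper leaves implicit. For small $n$ the empty ordered sums $P_n,Q_n,R_n$ vanish, and the distinct-index M\"obius identities remain valid there.
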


\begin{corollary}[first terminal cases of the square-graph conjecture]
\label{cor:first-terminal-cases}
Conjecture~\ref{conj:squaregraph} holds for $\D_{n-2}^{(n)}$ for all
$n\ge2$ and for $\D_{n-3}^{(n)}$ for all $n\ge3$.
\end{corollary}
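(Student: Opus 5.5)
The statement is Corollary~\ref{cor:first-terminal-cases}, and it just collects two earlier results. I will treat the two terminal orders separately, in each case exhibiting $\D_{n-r}^{(n)}$ as a positive combination of symmetrized square-graph monomials. The one point that needs care is converting between the paper's different normalizations of graph sums: the centered variables $z_i$, the ordered sums $\sum^*$, and the full $S_n$-symmetrizations $\widetilde G$ of Section~2.

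For $r=2$ I will invoke Proposition~\ref{prop:quadratic}. It gives $\D_{n-2}^{(n)}=(n-1)((n-2)!)^2\sum_{i<j}(x_i-x_j)^2$. Let $G$ be the single doubled edge on $\{1,2\}$ with $n-2$ isolated vertices. Then $\widetilde G=2(n-2)!\sum_{i<j}(x_i-x_j)^2$, so $\D_{n-2}^{(n)}$ is a positive multiple of one square-graph generator. This holds for every $n\ge2$; for $n=2$ there are no isolated vertices and the constant is $1$.

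For $r=3$ I will combine Theorem~\ref{thm:cubic} with Theorem~\ref{thm:cubic-square-expansion}, which is exactly Corollary~\ref{cor:cubic-discriminant-squaregraph}. The prefactor $n!((n-2)!)^3/12$ is positive. The four coefficients $6/(n(n-2))$, $1/(2n(n-2))$, $1/(4n(n-2))$ and $1/(8n(n-2)^2)$ are positive for $n\ge3$.

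It then remains to check that each of $T_n,P_n,Q_n,R_n$ is a nonnegative multiple of some $\widetilde G$. First, $z_i-z_j=x_i-x_j$, so these sums are polynomials in the original root differences. Second, an ordered sum over distinct indices of $\prod(z_{i_a}-z_{i_b})^{2m}$ equals $\widetilde G/(n-v)!$, where $G$ is the corresponding square graph on $v$ labelled vertices. The cases are $v=3$ for the triangle ($T_n$ carries an additional factor $1/6$), $v=4$ for the path and for the double edge plus edge, and $v=6$ for the matching. When $n<v$ the index set is empty, the ordered sum vanishes, and that term is simply dropped. The degenerate edge cases are $n=3$, where only $T_n$ survives, and $n=4,5$, where $R_n=0$. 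In each of these the identity of Theorem~\ref{thm:cubic-square-expansion} still holds, because Lemma~\ref{lem:degree-six-sums} was derived by Möbius inversion with $p_0=n$, and that derivation correctly yields zero for empty sums.

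The only real obstacle is this bookkeeping of conventions and small-$n$ degeneracies; there is no analytic difficulty. If I wanted extra reassurance for $n=3,4$, I would verify directly that the formula of Lemma~\ref{lem:degree-six-sums} for $R_n$ vanishes at $n=3,4,5$ modulo the relations among $p_2,p_3,p_4,p_6$ in few variables. For example, at $n=3$ the quantity $p_2^3-6p_3^2$ must equal $2T_3$, a single squared Vandermonde, and this is consistent with the classical discriminant.
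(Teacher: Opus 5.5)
Your proposal is correct and follows the paper's proof. The quadratic case is Proposition~\ref{prop:quadratic}, and the cubic case is Corollary~\ref{cor:cubic-discriminant-squaregraph} (Theorem~\ref{thm:cubic} combined with Theorem~\ref{thm:cubic-square-expansion}), which has positive coefficients for $n\ge3$ and the square-graph sums $T_n,P_n,Q_n,R_n$. Your extra bookkeeping is accurate: $\widetilde G=(n-v)!$ times the ordered sum, the factor $6$ for the triangle, and the vanishing of empty ordered sums when $n<v$, which is consistent with the M\"obius formula holding for all $n$. It makes explicit what the paper leaves implicit.
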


\begin{proof}
The quadratic case is Proposition~\ref{prop:quadratic}.  The cubic case is
Corollary~\ref{cor:cubic-discriminant-squaregraph}; all coefficients in its
expansion are positive for $n\ge3$, and all four displayed graph sums are
square-graph sums.
\end{proof}

\section{Further directions}

The quartic threshold changes the natural continuation of the problem.  Some
concrete directions are now the following.

\begin{enumerate}[label={\rm(\arabic*)}]
\item Give a conceptual, non-computational explanation of the separating
functional \eqref{eq:quartic-separator}.  In particular, determine whether it
comes from a distinguished representation-theoretic quotient, a moment
functional, or an extremal ray of the dual square-graph cone.  Understanding
the $53$ graph types on which it vanishes may identify the relevant face.

\item Determine $\mathcal N_5$.  Proposition~\ref{prop:quintic-terminal}
reduces this to an explicit degree-$20$ problem, while
Corollary~\ref{cor:quintic-odd-slice} completely factors the first natural
slice $p_3=p_5=0$.  The quartic result suggests that one should search
simultaneously for positive certificates at small $n$ and stable dual
separators at large $n$, rather than assuming one behavior in advance.

\item Decide whether eventual cone behavior in
Problem~\ref{prob:eventual-cone} holds for every fixed terminal order $r$.
Equation~\eqref{eq:orbit-monomial-coordinate} turns this into a finite family
of parametric cone problems, but the geometry can change with $n$.

\item Although $\Delta_{n,4}$ leaves the square-graph cone for $n\ge23$, it is
still nonnegative on real root configurations.  Determine whether it belongs
to a larger natural sum-of-squares cone in the differences, and compare this
with the sum-of-squares theorem for first derivatives in \cite{Sanyal}.

\item Determine whether the terminal Appell chain has a direct
representation-theoretic or probabilistic explanation that also controls the
dual cones.  The subset-average formula \eqref{eq:subset-average} suggests that
finite-population sampling operators may be relevant.
\end{enumerate}

\end{document}